\providecommand{\U}[1]{\protect\rule{.1in}{.1in}}
\providecommand{\U}[1]{\protect \rule{.1in}{.1in}}
\newtheorem{theorem}{Theorem}[section]
\newtheorem{corollary}[theorem]{Corollary}
\newtheorem{lemma}[theorem]{Lemma}
\newtheorem{proposition}[theorem]{Proposition}
\newenvironment{proof}[1][Proof]{\noindent \textbf{#1.} }{\  \rule{0.5em}{0.5em}}
\numberwithin{equation}{section}
\begin{document}

\title{On Waring--Goldbach Problem for Squares, \\ Cubes and Higher Powers }

\author{Min Zhang\footnotemark[1] \,\,\,\,\,  \& \,\,\, Jinjiang Li\footnotemark[2]
                    \vspace*{-4mm} \\
    \small School of Applied Science, Beijing Information Science and Technology University\footnotemark[1]
                    \vspace*{-4mm} \\
     \small  Beijing 100192, P. R. China
                     \vspace*{-4mm}  \\
    \small Department of Mathematics, China University of Mining and Technology\footnotemark[2]
                    \vspace*{-4mm}  \\
     \small  Beijing 100083, P. R. China  \vspace*{-4mm}  }

\footnotetext[2]{Corresponding author. \\
    \quad\,\, \textit{ E-mail addresses}:
     \href{mailto:min.zhang.math@gmail.com}{min.zhang.math@gmail.com} (M. Zhang),
     \href{mailto:jinjiang.li.math@gmail.com}{jinjiang.li.math@gmail.com} (J. Li).  }

\date{}
\maketitle


{\textbf{Abstract}}: Let $\mathcal{P}_r$ denote an almost--prime with at most $r$ prime factors, counted according to multiplicity. In this paper, we generalize the result of Vaughan \cite{Vaughan-1985} for ternary `admissible exponent'. Moreover, we use the refined `admissible exponent' to prove that, for $3\leqslant k\leqslant 14$ and for every sufficiently
large even integer $n$, the following equation
\begin{equation*}
    n=x^2+p_1^2+p_2^3+p_3^3+p_4^3+p_5^k
\end{equation*}
is solvable with $x$ being an almost--prime $\mathcal{P}_{r(k)}$ and the other variables primes, where $r(k)$ is defined in Theorem \ref{theorem-mixed}. This result constitutes a deepening upon that of previous results.

{\textbf{Keywords}}: Waring--Goldbach problem; Hardy--Littlewood method; sieve method; almost--prime.

{\textbf{MR(2020) Subject Classification}}: 11P05, 11P32, 11P55, 11N36.

\section{Introduction and main result}

The famous Goldbach Conjecture states that every even integer $N\geqslant6$ can be written as the sum of two odd primes, i.e.
\begin{equation}\label{goldbach-con}
  N=p_1+p_2.
\end{equation}
This conjecture still remains open. The recent developments on Goldbach Conjecture can be found in
\cite{Li-Hongze-2000-1,Li-Hongze-2000-2,Lu-Wenchao-2010,Pintz-2009,Pintz-2018} and their references.

In view of Hua's theorem \cite{Hua-1938} on five squares of primes and Lagrange's theorem on four squares, it seems reasonable to conjecture that every sufficiently large integer satisfying some necessary congruence conditions can be written as the sum of four squares of primes, i.e.
\begin{equation}\label{four-squares}
  N=p_1^2+p_2^2+p_3^2+p_4^2.
\end{equation}
However, such a conjecture is out of reach at present. For the recent developments on conjecture (\ref{four-squares}), one can be found in
\cite{Harman-Kumchev-2006,Harman-Kumchev-2010,Kumchev-Zhao-2016,Liu-Jianya-Zhan-2007} and their references.

Motivated by Hua's nine cubes of primes theorem \cite{Hua-1938}, it seems reasonable to conjecture that every sufficiently large even integer is the sum of eight cubes of primes, i.e.
\begin{equation}\label{eight-cubes}
  N=p_1^3+p_2^3+p_3^3+p_4^3+p_5^3+p_6^3+p_7^3+p_8^3.
\end{equation}
But unfortunately, such a conjecture (\ref{eight-cubes}) is still out of reach at present. For the recent developments on conjecture (\ref{eight-cubes}), one can see \cite{Kawada-Zhao-2019,Kumchev-Tolev-2005} and its references.

Linnik \cite{Linnik-1959,Linnik-1960} proved that each sufficiently large odd integer $N$ can be written as $N=p+n_1^2+n_2^2$, which was firstly formulated by Hardy and Littlewood \cite{Hardy-Littlewood-1923}, where $n_1$ and $n_2$ are integers. In view of this result, it seems reasonable to conjecture that every sufficiently large integer satisfying some necessary congruence conditions is a sum of a prime and two squares of primes, i.e.
\begin{equation}\label{one-prime-two-square}
  N=p_1+p_2^2+p_3^2.
\end{equation}
But current techniques lack the power to solve it. Many authors considered this problem and gave some approaches to approximate (\ref{one-prime-two-square})
(See \cite{Harman-Kumchev-2010,Hua-1938,Leung-Liu-1993,Li-Hongze-2007,Li-Hongze-2008,Liu-Tao-2004,Lv-Sun-2009,Wang-2004,Wang-Meng-2006,Zhao-2014-JNT}). Meanwhile, we can regard this problem as the hybrid problem of (\ref{goldbach-con}) and (\ref{four-squares}).

In \cite{Liu-Zhixin-2012}, Liu considered the hybrid problem of (\ref{goldbach-con}) and (\ref{eight-cubes}), i.e.
\begin{equation}\label{one-prime-four-cubes}
  N=p_1+p_2^3+p_3^3+p_4^3+p_5^3.
\end{equation}
There are some approximations to (\ref{one-prime-four-cubes}). On one hand, as an approach to prove (\ref{one-prime-four-cubes}), Liu and L\"{u} \cite{Liu-Lv-2011} proved that every sufficiently large odd integer can be written as the sum of a prime, four cubes of primes and bounded
number of powers of 2, i.e.
\begin{equation*}
  N=p_1+p_2^3+p_3^3+p_4^3+p_5^3+2^{v_1}+2^{v_2}+\cdots+2^{v_{K_1}},
\end{equation*}
and gave an acceptable value of $K_1$. On the other hand, Liu \cite{Liu-Zhixin-2012} gave another approximation to (\ref{one-prime-four-cubes}). He proved that
every sufficiently large odd integer $N$ can be written in the form $N=x+p_1^3+ p_2^3+p_3^3+p_4^3$, where
$p_1,p_2,p_3,p_4$ are primes and $x$ is an almost--prime $\mathcal{P}_2$. As usual, $\mathcal{P}_r$ always denotes an almost--prime with at most $r$ prime factors, counted according to multiplicity. In \cite{Liu-Lv-2011}, Liu and L\"{u} also considered the hybrid problem of (\ref{four-squares}) and (\ref{eight-cubes}),
\begin{equation}\label{two-primsquare-four-cubes}
  N=p_1^2+p_2^2+p_3^3+p_4^3+p_5^3+p_6^3.
\end{equation}
In their paper, they gave an approximation to (\ref{two-primsquare-four-cubes}) and proved that every sufficiently large even integer can be written as
the sum of two squares of primes, four cubes of primes and $211$ powers of $2$, i.e.
\begin{equation}\label{Liu-res-1}
  N=p_1^2+p_2^2+p_3^3+p_4^3+p_5^3+p_6^3+2^{v_1}+2^{v_2}+\cdots+2^{v_{211}}.
\end{equation}
Later, in 2017, Liu \cite{Liu-Zhixin-2017} proved that every sufficiently large even integer can be written as the sum of two squares of primes, three cubes of primes, one fourth power of prime and a bounded number of powers of $2$, i.e.
\begin{equation}\label{Liu-res-2}
  N=p_1^2+p_2^2+p_3^3+p_4^3+p_5^3+p_6^4+2^{v_1}+2^{v_2}+\cdots+2^{v_{K_2}}.
\end{equation}
Also, in 2016, Cai \cite{Cai-2016} gave another approximation to (\ref{two-primsquare-four-cubes}), and proved that any sufficiently large even integer $N$ can be written in the form $N=x^2+p_1^2+ p_2^3+p_3^3+p_4^3+p_5^3$, where
$p_1,p_2,p_3,p_4,p_5$ are primes and $x$ is an almost--prime $\mathcal{P}_3$.

In view of the results (\ref{Liu-res-1}), (\ref{Liu-res-2}) and the result of Cai, in this paper, we shall give some approximations to the generalized cases of (\ref{two-primsquare-four-cubes}).

\begin{theorem}\label{theorem-mixed}
 For $3\leqslant k\leqslant14$, let $\mathscr{R}_k(n)$ denote the number of solutions of the equation
\begin{equation}
  n=x^2+p_1^2+p_2^3+p_3^3+p_4^3+p_5^k
\end{equation}
 with $x$ being an almost--prime $\mathcal{P}_{r(k)}$ and the $p_j$'s primes. Then, for every sufficiently large even integer $n$, there holds
 \begin{equation*}
   \mathscr{R}_k(n)\gg n^{\frac{17}{18}+\frac{5}{6k}}\log^{-6}n,
  \end{equation*}
where
\begin{align*}
 &  r(3)=3,\,\,\,\, r(4)=4,\,\,\,\, r(5)=5,\,\,\,\,\, r(6)=5,\,\,\,\,\,\, r(7)=6,\,\,\,\,\,\,\,\, r(8)=7, \\
 &  r(9)=7,\,\, r(10)=8,\,\, r(11)=9,\,\, r(12)=10,\,\, r(13)=11,\,\, r(14)=13.
\end{align*}
\end{theorem}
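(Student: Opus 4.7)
The plan is to combine the Hardy--Littlewood circle method with a weighted sieve of Richert--Greaves type, using the refined ternary admissible exponent promised in the abstract as the crucial analytic input.

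First I would introduce the weighted counting function
\[
\mathscr{W}(n) = \sum_{\substack{x^2+p_1^2+p_2^3+p_3^3+p_4^3+p_5^k=n \\ (x,\Pi(z))=1}} w(x),
\]
where $w(x)$ is the Richert/Greaves weight attached to a sifting parameter $z = n^{1/(2s)}$ and $s$ is to be chosen. By orthogonality,
\[
\mathscr{W}(n) = \int_0^1 f(\alpha;z) g_2(\alpha) h_3(\alpha)^3 h_k(\alpha) e(-n\alpha)\,d\alpha,
\]
where $g_2(\alpha)=\sum_{p\le n^{1/2}} (\log p) e(\alpha p^2)$, $h_j(\alpha)=\sum_{p\le n^{1/j}}(\log p) e(\alpha p^j)$ for $j=3,k$, and $f(\alpha;z) = \sum_{x\le n^{1/2},\, (x,\Pi(z))=1} w(x) e(\alpha x^2)$. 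The aim is to produce a lower bound $\mathscr{W}(n) \gg n^{17/18 + 5/(6k)} \log^{-6} n$, from which the switch to $\mathcal{P}_{r(k)}$ representations follows by choosing $s$ so that every $x$ counted by $\mathscr{W}$ has at most $r(k)$ prime factors.

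Next I would carry out the standard major/minor arc dissection with $\mathfrak{M} = \bigcup_{q\le P}\bigcup_{(a,q)=1}\mathfrak{M}(q,a)$ and $P=n^{\delta}$ for a small $\delta$. On the major arcs, the usual asymptotic formulas for the Gauss--type sums and for $f(\alpha;z)$ (where the sieve weight contributes a Mertens-type factor $V(z)\sim e^{-\gamma}/\log z$) combine to give the expected main term
\[
\mathfrak{S}(n)\, J(n)\, V(z) \;\gg\; n^{\tfrac{17}{18}+\tfrac{5}{6k}} \log^{-1} n,
\]
provided the singular series $\mathfrak{S}(n)$ is bounded away from zero for even $n$ satisfying the necessary local conditions, which is standard since the cube and square terms already provide enough local solubility. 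The sieve weights contribute a harmless log factor which is absorbed into the $\log^{-6} n$ of the claim.

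The main difficulty, as expected, is the minor arc bound. Here I would isolate the triple $(p_2^3,p_3^3,p_5^k)$ (or one of its variants) and invoke the refined ternary admissible exponent generalizing Vaughan (1985), which the paper extends: one shows
\[
\int_{\mathfrak{m}} |h_3(\alpha)|^{2} |h_k(\alpha)|^{2}\, d\alpha \;\ll\; n^{\tfrac{2}{3}+\tfrac{1}{k}-\tfrac{1}{3}-\sigma(k)+\varepsilon}
\]
for a positive admissible exponent $\sigma(k)$, and couples this with $L^\infty$ estimates on the remaining factors via Weyl--Vinogradov, combined with Cauchy--Schwarz applied to $f(\alpha;z)$ against $g_2(\alpha)h_3(\alpha)$. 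The size of $\sigma(k)$ for $3\le k\le 14$ dictates both the admissible sifting level $D=n^{1/s}$ and hence the final value of $r(k)$; the sharper $\sigma(k)$ of this paper is exactly what allows the range $k\le 14$ and the stated table for $r(k)$.

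Finally I would apply the weighted sieve machinery: with the lower bound $\mathscr{W}(n) \gg n^{17/18+5/(6k)}\log^{-6} n$ in hand and the sifting function $(x,\Pi(z))=1$, the Richert weight ensures that any $x$ contributing positively has $\Omega(x) \le r(k)$, where $r(k)$ is determined by $s$ (equivalently by the level of distribution achievable from the minor arc exponent). A standard Buchstab--type decomposition and numerical optimization of sieve parameters against the admissible exponent table then yield precisely the values $r(3)=3,\dots,r(14)=13$ stated in the theorem. The hardest and most delicate step will be establishing the refined admissible exponent with enough uniformity in $k$ to cover the whole range $3\le k\le 14$; the sieve step, while tedious, is essentially a numerical optimization once the analytic input is fixed.
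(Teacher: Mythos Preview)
Your outline diverges from the paper in both the sieve architecture and the analytic input, and in its present form it has real gaps.

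\textbf{Sieve method.} The paper does not use a Richert--Greaves weighted sieve. It uses Iwaniec's linear sieve (Rosser weights $\lambda^\pm$) together with a \emph{switching} device: one writes
\[
\mathscr{R}_k(n)\;\geqslant\;\Upsilon_0-\sum_{r=r(k)+1}^{[36k/(15-k)]}\Upsilon_r,
\]
where $\Upsilon_0$ counts solutions with $(x,\mathscr{P})=1$ (lower bound sieve on $x$), while each $\Upsilon_r$ counts solutions with $x=p_{(1)}\cdots p_{(r)}\in\mathcal{B}_r$ and is bounded from above by \emph{swapping the roles of the two square variables} and applying an upper bound sieve to the slot formerly occupied by $p_1$. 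This switching is what produces the constants $c_r(k)$ and ultimately the table of $r(k)$; your Richert weight does not supply this structure, and you give no mechanism for computing the numerical thresholds.

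\textbf{Ranges and the admissible exponent.} You take all prime variables in full ranges $p\le n^{1/j}$. The paper instead puts $p_4\sim n^{5/18}$ and $p_5\sim n^{5/(6k)}$ (the ``$5/6$'' exponents). This is not cosmetic: the key new analytic input is that $(3;1),(3;\tfrac{5}{6}),(k;\tfrac{5}{6})$ form admissible exponents, giving
\[
\int_0^1\bigl|f_3(\alpha)f_3^*(\alpha)f_k^*(\alpha)\bigr|^2\,\mathrm{d}\alpha\ll n^{\frac{1}{3}+\frac{5}{18}+\frac{5}{6k}+\varepsilon},
\]
which, via H\"older with Hua's fourth moments for $f_2$ and $f_3$, yields the $L^1$ bound $\int_0^1|f_2f_3^2f_3^*f_k^*|\ll n^{13/18+5/(12k)+\varepsilon}$. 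Your proposed mean value $\int_{\mathfrak m}|h_3|^2|h_k|^2$ is neither ternary nor the right object, and the exponent you wrote down is not the one that arises.

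\textbf{Minor arcs and the level of distribution.} The decisive missing idea is the \emph{bilinear} treatment of the sieve-weighted square sum. The paper writes the Rosser weight as a convolution so that
\[
h(\alpha)=\sum_{m\leqslant D^{2/3}}a(m)\sum_{s\leqslant D^{1/3}}b(s)\sum_{X_2/(ms)<t\leqslant 2X_2/(ms)}e\bigl((mst)^2\alpha\bigr),
\]
and then invokes the Br\"udern--Kawada estimate to get a pointwise bound $h(\alpha)\ll n^{2/9+5/(12k)-24\varepsilon}$ on the deep minor arcs $\mathfrak m_2$. This is what fixes the sieve level $D=n^{5/(8k)-1/24-51\varepsilon}$ and hence $z=D^{1/3}$, which in turn determines $r(k)$. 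Your plan offers no analogue: a generic Richert-weighted $f(\alpha;z)$ does not come with this bilinear factorisation, and ``Cauchy--Schwarz against $g_2h_3$'' will not recover the required saving. Without this step you cannot justify any specific level of distribution, so the claimed values of $r(k)$ are unsupported.
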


We approach Theorem \ref{theorem-mixed} via the Hardy--Littlewood method, and in a certain sense by a unified approach. To be specific, we use the ideas, which were firstly created by Br\"{u}dern \cite{Brudern-1995-1,Brudern-1995-2} and developed by Br\"{u}dern and Kawada \cite{Brudern-Kawada-2002,Brudern-Kawada-2009}, combining with Hardy--Littlewood method and Iwaniec's linear sieve method to give the proof of Theorem \ref{theorem-mixed}. To treat the minor arcs in the final application
of the circle method it is necessary to improve `admissible exponents' (for the definition see Section \ref{sec-AE}) for mixed sums of cubes and $k$--th powers. In the proof of Theorem \ref{theorem-mixed} we require a result on two cubes and a $k$--th power. The main idea is to apply the Hardy--Littlewood method as modified by Vaughan \cite{Vaughan-1985} to the mixed situation for one cubes and two $k$--th powers and then to combine this with the result of Vaughan \cite{Vaughan-1985}, by the Cauchy's inequality. This auxiliary result constitutes the most novel part of the present paper which may perhaps be of interest in its own right. We formulate it precisely as Theorem \ref{AE-Thm} in the following section.
Unfortunately Vaughan's elegant argument in \cite{Vaughan-1985} does not carry over very well to mixed problems; a considerable
refinement of his method will be necessary. A detailed explanation is given during the proof in Section \ref{sec-AE}.

\bigskip

\noindent
\textbf{Notation.} Throughout this paper, small italics denote integers when they do not obviously represent a
function; $p,\,p_1,\,p_2\cdots$, with or without subscript, always stand for a prime number; $\varepsilon$ always denotes an arbitrary small positive constant, which may not be the same at different occurrences; $\gamma$ denotes Euler's constant; $f(x)\ll g(x)$ means that $f(x)=O(g(x))$; $f(x)\asymp g(x)$ means that $f(x)\ll g(x)\ll f(x)$; the constants in the $O$--term and $\ll$--symbol depend at most on $\varepsilon$; $\mathcal{P}_r$ always denotes an almost--prime with at most $r$ prime factors, counted according to multiplicity. As usual, $\varphi(n),\,\mu(n)$ and $\tau_j(n)$ denote
Euler's function, M\"{o}bius' function and the $j$--dimensional divisor function respectively. Especially, we
write $\tau(n)=\tau_2(n)$.We denote by $a(m)$ and $b(\ell)$ arithmetical functions satisfying $|a(m)|\ll1$ and $|b(\ell)|\ll1$;
$(s,t)$ denotes the greatest common divisor of $s$ and $t$, while $(k;\lambda)$ is a pair of admissible exponents (see the next section); $e(\alpha)=e^{2\pi i\alpha}$ for abbreviation.

\section{Admissible Exponents for Cubes and Higher Powers}\label{sec-AE}

The idea of admissible exponents goes back to Hardy and Littlewood \cite{Hardy-Littlewood-1925}, but was introduced formally by Davenport and Erd\"{o}s \cite{Davenport-Erdos}. Our definition is adapted from Thanigasalam \cite{Thanigasalam-1980}. let
\begin{equation*}
   f_k(\alpha,X)=\sum_{X<x\leqslant 2X}e(\alpha x^k).
\end{equation*}
Let $k_i\in\mathbb{N},\,0<\lambda_i\leqslant1\,(i=1,2,\dots,s)$ and $P_i=N^{\lambda_i/k_i}$. Then the pairs $$(k_1;\lambda_1),(k_2;\lambda_2),\dots,(k_s;\lambda_s)$$
are said to form admissible exponents if
\begin{equation}\label{AE-def}
   \int_0^1 \big|f_{k_1}(\alpha,P_1)\cdots f_{k_s}(\alpha,P_s)\big|^2\mathrm{d}\alpha\ll P_1P_2\cdots P_sN^\varepsilon.
\end{equation}
This is equivalent to Thanigasalam's definition, for the integral in (\ref{AE-def}) is equal to the number of solutions of
\begin{equation*}
   x_1^{k_1}+x_2^{k_2}+\cdots+x_s^{k_s}=y_1^{k_1}+y_2^{k_2}+\cdots+y_s^{k_s}; \qquad P_i<x_i,y_i\leqslant2P_i.
\end{equation*}
Our aim is to generalize the result of Vaughan \cite{Vaughan-1985} and establish the following Theorem.

\begin{theorem}\label{AE-Thm}
 For $k\geqslant4$, the pairs $(3;1),(k;\frac{5}{6}),(k;\frac{5}{6})$ form admissible exponents.
\end{theorem}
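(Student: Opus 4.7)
The goal is to show that with $P_1 = N^{1/3}$ and $P_2 = P_3 = N^{5/(6k)}$,
$$
J := \int_0^1 \bigl|f_3(\alpha,P_1)\bigr|^2 \bigl|f_k(\alpha,P_2)\bigr|^2 \bigl|f_k(\alpha,P_3)\bigr|^2 \,d\alpha \ll P_1 P_2 P_3 N^{\varepsilon}.
$$
The integral $J$ counts the solutions of $x_1^3 + x_2^k + x_3^k = y_1^3 + y_2^k + y_3^k$ in the prescribed ranges; the diagonal contributes $\asymp P_1 P_2 P_3$, so the task is to show the off-diagonal contribution is not larger, up to a factor of $N^{\varepsilon}$. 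Following the strategy sketched in the introduction, I would apply the Hardy--Littlewood method in the refined form of Vaughan \cite{Vaughan-1985} to the mixed configuration of one cube and two $k$-th powers, and fuse the result with Vaughan's mean-value bound for pure cubic exponential sums via Cauchy's inequality.

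\textbf{Dissection.} Partition $[0,1]$ into major arcs $\mathfrak{M}$, consisting of those $\alpha$ with $|\alpha - a/q| \leq Q/(qN)$ for some coprime pair $(a,q)$ with $q \leq Q = N^{\eta}$ and a small $\eta > 0$ to be chosen, and minor arcs $\mathfrak{m} = [0,1] \setminus \mathfrak{M}$. On $\mathfrak{M}$, apply the standard rational approximation $f_k(\alpha,P) \approx q^{-1} S_k(q,a) v_k(\beta)$ with $\beta = \alpha - a/q$, together with routine bounds on the associated singular series and singular integral, to obtain a major-arc contribution $\ll P_1 P_2 P_3 N^{\varepsilon}$ of the expected diagonal order.

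\textbf{Minor arcs.} This is the substantive step. Apply Cauchy's inequality in the form
$$
\int_{\mathfrak{m}} |f_3|^2|f_k|^2|f_k|^2 \,d\alpha \leq \Bigl(\int_0^1 |f_3|^2|f_k|^2 \,d\alpha\Bigr)^{1/2}\Bigl(\int_{\mathfrak{m}} |f_3|^2|f_k|^6 \,d\alpha\Bigr)^{1/2}.
$$
The first factor is $\ll P_1 P_2 N^{\varepsilon}$ by a standard divisor-function count of the solutions of $x_1^3+x_2^k = y_1^3+y_2^k$. The second factor is treated by a further dyadic sub-dissection of $\mathfrak{m}$ according to the denominator $q$ of the nearest rational; on each dyadic piece one pairs a Weyl-type pointwise bound for $f_k$ with Vaughan's mean-value estimate for cubic exponential sums (in particular the admissibility of $(3;1),(3;1),(3;1)$, from \cite{Vaughan-1985}). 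Balancing these across the dyadic levels would yield $\int_{\mathfrak{m}} |f_3|^2|f_k|^6 \,d\alpha \ll P_1 P_2^3 N^{\varepsilon}$, which combined with the first factor provides the required bound on the minor-arc contribution.

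\textbf{Main obstacle.} The principal difficulty is the scale mismatch between $P_1 = N^{1/3}$ and $P_2 = N^{5/(6k)}$, which is substantial for $k \geq 4$. Vaughan's original argument for pure cubes treats all three variables uniformly and does not transport directly to the mixed setting; the required refinement lies in a delicate calibration of the sub-dissection of $\mathfrak{m}$ and in interpolating between Weyl-type pointwise bounds for $f_k$ and mean-value bounds for $f_3$, arranged so that no factor of $N^{\delta}$ is lost in the final estimate. This calibration is precisely the considerable refinement of Vaughan's method alluded to in the paragraph preceding the theorem.
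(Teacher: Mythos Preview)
Your proposal misses the central mechanism of the argument and, as written, the minor-arc step does not close.

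\textbf{What the paper actually does.} One does not apply the circle method directly to $\int_0^1 |f_3|^2|f_k|^4\,\mathrm{d}\alpha$. Instead, one first \emph{differences in the cubic variable}: writing $x_2=x_1+h$ converts the problem to bounding $\int_0^1 G(\alpha)|f_k(\alpha,Q)|^4\,\mathrm{d}\alpha$, where $G(\alpha)=\sum_{0<h<H}\sum_{P<x\le 2P} e(\alpha h(3x^2+3xh+h^2))$ and $H\asymp P^{1/2}$. The Farey dissection is then applied to \emph{this} integral. On the minor arcs one uses Vaughan's pointwise bound for the differenced sum $G(\alpha)$ (which gains a full factor $H$), together with the Hua bound $\int|f_k|^4\ll Q^{2+\varepsilon}$. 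The genuine novelty lies on the \emph{major} arcs: after approximating $G$ by $G_1$, the key Lemma~\ref{AE-lemma} establishes
\[
\int_{\mathfrak M}|G_1(\alpha)|^2|f_k(\alpha,X)|^4\,\mathrm{d}\alpha \ll HP^\varepsilon\bigl(PX^2+X^4\bigr),
\]
and Cauchy's inequality is applied \emph{on the major arcs} in the shape
$\int_{\mathfrak M} G_1|f_k|^4 \le (\int_{\mathfrak M}|G_1|^2|f_k|^4)^{1/2}(\int|f_k|^4)^{1/2}$.
The combination with Vaughan's cube result happens only later, in Corollary~\ref{AE-corollary}.

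\textbf{Why your minor-arc scheme fails.} In your Cauchy split the second factor is $\int_{\mathfrak m}|f_3|^2|f_k|^6\,\mathrm{d}\alpha$, which you hope to bound by $P_1P_2^3N^\varepsilon$. But the only available tools are (i) Weyl-type pointwise bounds for $f_k$, which for $k\ge 4$ save at most $P_2^{2^{1-k}}\le P_2^{1/8}$, and (ii) Hua's moment bounds. A quick check shows that no combination of these reaches the target: e.g.\ taking $|f_k|$ out pointwise and using $\int|f_3|^2\ll P_1$ would require $|f_k|\ll P_2^{1/2+\varepsilon}$ on the minor arcs, i.e.\ square-root cancellation, which is far beyond Weyl for $k\ge 4$; the H\"older variants are equally short by a fixed power of $N$. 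Invoking the admissibility of $(3;1),(3;1),(3;1)$ does not help here since only $|f_3|^2$ appears in your integrand. The ``delicate calibration'' you allude to cannot be carried out without the differencing step, which is precisely what transfers the burden from pointwise bounds on $f_k$ to the quadratic Weyl sum $G$ where one genuinely saves a factor $H\asymp P^{1/2}$.

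\textbf{Also.} Your description places all the difficulty on the minor arcs and dismisses the major arcs as ``routine''. In the actual proof the balance is reversed: the minor arcs are handled in two lines once $G$ is introduced, while the major-arc mean value (Lemma~\ref{AE-lemma}) is the substantive new ingredient and occupies most of the section.
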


\textbf{\textit{Proof of Theorem \ref{AE-Thm}.}} Let $Q=P^{\frac{5}{2k}}$ and let $S$ denote the number of solutions of
\begin{equation}\label{eq-zong}
   x_1^3+y_1^k+y_2^k=x_2^3+y_3^k+y_4^k
\end{equation}
with $P<x_i\leqslant2P$ and $Q<y_i\leqslant2Q$. Then we have to show that
\begin{equation*}
   S\ll P^{1+\varepsilon}Q^2.
\end{equation*}
Let $S_1$ and $S_2$ denote the number of solutions of (\ref{eq-zong}) with $x_1=x_2$ and $x_1\not=x_2$, respectively. Then, by
Hua's inequality (see Lemma 2.5 of Vaughan \cite{Vaughan-book}), it is easy to see that
\begin{equation}\label{S_1-upper}
   S_1\ll PQ^{2+\varepsilon},
\end{equation}
which is acceptable. It remains to estimate $S_2$. Write $x_2=x_1+h$. Then (\ref{eq-zong}) becomes
\begin{equation}\label{eq-zong-trans}
   h\big(3x_1^2+3x_1h+h^2\big)=y_1^k+y_2^k-y_3^k-y_4^k.
\end{equation}
By symmetry it is sufficient to estimate the solutions of (\ref{eq-zong-trans}) with $h>0$. Since $y_1^k+y_2^k\leqslant2^{k+1}Q^k$ and $x_1^2>P^2$, it follows that
\begin{equation*}
   h<\frac{2^{k+1}}{3}Q^kP^{-2}<2^kQ^kP^{-2}=2^kP^{\frac{1}{2}}=H,
\end{equation*}
say. Let
\begin{equation*}
   G(\alpha)=\sum_{0<h<H}\sum_{P<x\leqslant2P}e\Big(\alpha h\big(3x^2+3xh+h^2\big)\Big),
\end{equation*}
then
\begin{equation}\label{S_2-upp-1}
   S_2\ll \int_0^1G(\alpha)\big|f(\alpha)\big|^4\mathrm{d}\alpha
   =\int_{\frac{1}{PH}}^{1+\frac{1}{PH}}G(\alpha)\big|f(\alpha)\big|^4\mathrm{d}\alpha,
\end{equation}
where $f(\alpha)=f_{k}(\alpha,Q)$ for abbreviation. By Dirichlet's theorem on Diophantine rational approximation (for instance, see Lemma 2.1 of Vaughan \cite{Vaughan-book}), each $\alpha\in[1/(PH),1+1/(PH)]$ can be written in the form
\begin{equation*}
   \alpha=\frac{a}{q}+\lambda,\qquad |\lambda|\leqslant\frac{1}{qPH}
\end{equation*}
for some integers $a,q$ with $1\leqslant a\leqslant q\leqslant PH$ and $(a,q)=1$. Then we define the major arcs $\mathfrak{M}$ and minor arcs $\mathfrak{m}$ as follows:
\begin{equation}\label{M-m-def}
   \mathfrak{M}=\bigcup_{1\leqslant q\leqslant P}\bigcup_{\substack{1\leqslant a\leqslant q\\ (a,q)=1}}\mathfrak{M}(q,a),
   \qquad \mathfrak{m}=\bigg[\frac{1}{PH},1+\frac{1}{PH}\bigg] \setminus\mathfrak{M},
\end{equation}
where
\begin{equation*}
 \mathfrak{M}(q,a)=\bigg[\frac{a}{q}-\frac{1}{qPH},\frac{a}{q}+\frac{1}{qPH}\bigg].
\end{equation*}
Then we have
\begin{equation}\label{S_2-inte-fenjie}
 \int_{\frac{1}{PH}}^{1+\frac{1}{PH}}G(\alpha)\big|f(\alpha)\big|^4\mathrm{d}\alpha
 =\bigg\{\int_{\mathfrak{M}}+\int_{\mathfrak{m}}\bigg\}G(\alpha)\big|f(\alpha)\big|^4\mathrm{d}\alpha.
\end{equation}
According to the Lemma on p. 18 of Vaughan \cite{Vaughan-1985}, we know that
\begin{equation*}
 G(\alpha)\ll HP^{1+\varepsilon}\big(q^{-1}+P^{-1}+qP^{-2}H^{-1}\big)^{\frac{1}{2}}.
\end{equation*}
As the structure of $\mathfrak{m}$, we know that, for $\alpha\in\mathfrak{m}$, there holds $P<q\leqslant PH$, and thus
\begin{equation*}
 G(\alpha)\ll P^{1+\varepsilon},
\end{equation*}
from which and a simple consequence of Hua's lemma (Lemma 2.5 of Vaughan \cite{Vaughan-book})
\begin{equation}\label{Hua-lemma-4th}
 \int_{0}^{1}\big|f(\alpha)\big|^4\mathrm{d}\alpha\ll Q^{2+\varepsilon},
\end{equation}
we derive that
\begin{equation}\label{S_2-minor}
 \int_{\mathfrak{m}}G(\alpha)\big|f(\alpha)\big|^4\mathrm{d}\alpha\ll P^{1+\varepsilon}Q^2.
\end{equation}
From (\ref{S_2-upp-1}), (\ref{S_2-inte-fenjie}) and (\ref{S_2-minor}), we deduce that
\begin{equation}\label{S_2-upper-2}
 S_2\ll \int_{\mathfrak{M}}G(\alpha)\big|f(\alpha)\big|^4\mathrm{d}\alpha+ P^{1+\varepsilon}Q^2.
\end{equation}
In order to estimate the integral on the major arcs, we approximate $G(\alpha)$ by a suitable function $G_1(\alpha)$. Define
\begin{align*}
   & \sigma_h(q,a)=\sum_{x=1}^qe\bigg(\frac{a}{q}\big((x+h)^3-x^3\big)\bigg),    \\
   &    v_h(\lambda)=\int_P^{2P}e\Big(\lambda\big((u+h)^3-u^3\big)\Big)\mathrm{d}u,   \\
   &G_1(\alpha)=\sum_{0<h<H}q^{-1}\sigma_h(q,a)v_h\bigg(\alpha-\frac{a}{q}\bigg).
\end{align*}
Then for $\alpha\in\mathfrak{M}(q,a)$, $G_1(\alpha)$ is well defined on $\mathfrak{M}$. By (2.13) of Lemma 2 in Vaughan
\cite{Vaughan-1986} with $k=3$, one has
\begin{equation*}
 \sum_{P<x\leqslant 2P}e\Big(\alpha h(3x^2+3xh+h^2)\Big)=q^{-1}\sigma_h(q,a)v_h(\lambda)
 +O\Big(q^{\frac{1}{2}+\varepsilon}(q,h)^{\frac{1}{2}}\Big),
\end{equation*}
from which we obtain
\begin{equation}\label{G=G_1+E}
G(\alpha)=G_1(\alpha)+O\Bigg(q^{\frac{1}{2}+\varepsilon}\sum_{0<h<H}(q,h)^{\frac{1}{2}}\Bigg).
\end{equation}
For the $O$--term in (\ref{G=G_1+E}), writing $(q,h)=d$, we see that
\begin{align*}
              q^{\frac{1}{2}+\varepsilon}\sum_{0<h<H}(q,h)^{\frac{1}{2}}
  \ll  &\,\,  q^{\frac{1}{2}+\varepsilon}\sum_{d|q}\sum_{h<H/d}d^{\frac{1}{2}} \ll q^{\frac{1}{2}+\varepsilon}
              \sum_{d|q}d^{\frac{1}{2}}\cdot\frac{H}{d}
                       \nonumber \\
  \ll  &\,\   Hq^{\frac{1}{2}+\varepsilon}\tau(q)\ll Hq^{\frac{1}{2}+\varepsilon},
\end{align*}
from which and (\ref{G=G_1+E}) we derive that
\begin{equation}\label{G=G_1+E-2}
G(\alpha)=G_1(\alpha)+O\big(P^{1+\varepsilon}\big)
\end{equation}
uniformly for $\alpha\in\mathfrak{M}$. Combining (\ref{Hua-lemma-4th}), (\ref{S_2-upper-2}) and (\ref{G=G_1+E-2}), we have
\begin{equation}\label{S_2-upper-3}
 S_2\ll \int_{\mathfrak{M}}G_1(\alpha)\big|f(\alpha)\big|^4\mathrm{d}\alpha+ P^{1+\varepsilon}Q^2.
\end{equation}
In order to give a proper upper bound for the integral on the right--hand side of (\ref{S_2-upper-3}), we need to establish
the following lemma, which is the crucial ingredient of this section.
\begin{lemma}\label{AE-lemma}
Let $\mathfrak{M}$ be defined as in (\ref{M-m-def}),then for $k\geqslant4$ and $X\leqslant P$, there holds
\begin{equation*}
     \int_{\mathfrak{M}}\big|G_1^2(\alpha)f_k^4(\alpha,X)\big|\mathrm{d}\alpha\ll HP^\varepsilon\big(PX^2+X^4\big).
\end{equation*}
\end{lemma}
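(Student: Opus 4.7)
My approach is to expand $|G_1(\alpha)|^2$ on each major arc, use the Weyl-sum approximation to pass from $T_h(\alpha) := q^{-1}\sigma_h(q,a)v_h(\lambda)$ back to the full exponential sum $R_h(\alpha) := \sum_{P < x \leqslant 2P} e(\alpha ((x+h)^3 - x^3))$, and thereby reduce the integral (modulo a controllable error) to
\begin{equation*}
\int_{0}^{1} \bigl|G(\alpha)\bigr|^2 \bigl|f_k(\alpha, X)\bigr|^4 \, \mathrm{d}\alpha,
\end{equation*}
which equals the number of integer solutions of
\begin{equation*}
(x_1+h_1)^3 - x_1^3 - (x_2+h_2)^3 + x_2^3 = y_3^k + y_4^k - y_1^k - y_2^k
\end{equation*}
in the ranges $0 < h_i < H$, $P < x_i \leqslant 2P$, $X < y_j \leqslant 2X$.

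The count splits into a diagonal and a non-diagonal part. In the diagonal case, $(x_1+h_1)^3 - x_1^3 = (x_2+h_2)^3 - x_2^3$ forces $y_1^k + y_2^k = y_3^k + y_4^k$, so Hua's lemma (\ref{Hua-lemma-4th}) yields $\ll X^{2+\varepsilon}$ admissible tuples $(y_1,y_2,y_3,y_4)$. The count of $(h_i,x_i)$-quadruples is handled by noting that the representation $V = h(3x^2 + 3xh + h^2)$ forces $h \mid V$, after which $x$ solves a quadratic with at most two integer roots; hence $U(V) := |\{(h,x) : h(3x^2+3xh+h^2) = V\}| \ll \tau(V) \ll P^\varepsilon$ and $\sum_V U(V)^2 \ll HP^{1+\varepsilon}$, giving a contribution $\ll HP^{1+\varepsilon} X^{2+\varepsilon}$ that matches the $HPX^2$ term of the target. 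In the non-diagonal case with $h_1 = h_2 = h$, the cubic difference factors as $3h(x_1-x_2)(x_1+x_2+h)$, so for each $(h, y_1, y_2, y_3, y_4)$ with $m := y_3^k + y_4^k - y_1^k - y_2^k \neq 0$ the number of admissible $(x_1,x_2)$ is $\ll \tau(|m|/(3h)) \ll P^\varepsilon$. Summing over the $\leqslant X^4$ tuples with $m \neq 0$ and then over $h < H$ yields $\ll H P^\varepsilon X^4$, matching the $HX^4$ term.

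The principal obstacle is the remaining non-diagonal contribution with $h_1 \neq h_2$, where the equation
\begin{equation*}
h_1(3x_1^2+3x_1h_1+h_1^2) - h_2(3x_2^2+3x_2h_2+h_2^2) = m
\end{equation*}
no longer factors. After the substitution $u = 2x_1+h_1$, $v = 2x_2+h_2$ it becomes the Pell-like equation $3h_1 u^2 - 3h_2 v^2 = 4m + h_2^3 - h_1^3$. A pointwise count of $O(P^\varepsilon)$ solutions per $(h_1, h_2, m)$ is attainable but yields a total of $\ll H^2 P^\varepsilon X^4 = P^{1+\varepsilon} X^4$, exceeding the target by a factor of $H$; I plan to save this factor by reinterpreting the total count as a moment integral of a suitable exponential sum and absorbing one power of $H$ through its $L^2$-structure. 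Secondarily, the crude pointwise bound $|G - G_1| \ll P^{1+\varepsilon}$ from (\ref{G=G_1+E-2}) produces an $L^2$-error $\ll P^{2+\varepsilon} X^{2+\varepsilon}$, too large by $P^{1/2}$; this is remedied by retaining the finer $q$-dependent estimate $|G - G_1| \ll q^{1/2+\varepsilon} H P^\varepsilon$ on $\mathfrak{M}(q,a)$ and summing with weights $q^{1+\varepsilon}$ against $\sum_{(a,q)=1} \int_{|\lambda|<1/(qPH)} |f_k(a/q+\lambda)|^4 \, \mathrm{d}\lambda$, exploiting the local decay of $|f_k|$ on individual arcs.
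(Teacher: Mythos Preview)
Your approach has a genuine gap that the proposed fixes do not close. Passing from $G_1$ back to $G$ and extending to $[0,1]$ discards precisely the structure that makes the lemma tractable: $G_1$ is an explicit product of a Gauss sum and an oscillatory integral on each $\mathfrak{M}(q,a)$, and that is what supplies the extra savings. Your $h_1\neq h_2$ non-diagonal bound $\ll H^2X^4P^\varepsilon=P^{1+\varepsilon}X^4$ really is off by $H=P^{1/2}$ from the target $HP^\varepsilon X^4$, and the plan to ``reinterpret the total count as a moment integral and absorb one power of $H$ through its $L^2$-structure'' is circular---the moment integral in question is $\int_0^1|G|^2|f_k|^4$, which is exactly what you are trying to bound. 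The secondary fix for the $G-G_1$ error is also not viable: weighting $\sum_{(a,q)=1}\int_{\mathfrak{M}(q,a)}|f_k|^4$ by $q^{1+\varepsilon}$ and invoking major-arc decay of $f_k$ introduces an approximation error of size $q^{1/2+\varepsilon}$ in $f_k$ itself, whose fourth power summed against $q^{1+\varepsilon}$ over $q\leqslant P$ swamps everything.

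The paper works in the opposite direction: it keeps $G_1$ and estimates it pointwise on $\mathfrak{M}(q,a)$. From $\sigma_h(q,a)\ll q^{1/2+\varepsilon}(q,h)^{1/2}$ and Cauchy's inequality one gets
\[
|G_1(\alpha)|^2 \ll P^\varepsilon H\,q^{-1}\sum_{0<h<H}(q,h)\,|v_h(\lambda)|^2,
\]
and the standard bound $v_h(\lambda)\ll P/(1+P^2h|\lambda|)$ yields $\int_{|\lambda|\leqslant 1/(qPH)}|v_h(\lambda)|^2\,\mathrm{d}\lambda\ll h^{-1}$. One then expands $|f_k(\alpha,X)|^4=\sum_u\varrho(u)e(\alpha u)$ and sums over $a$ coprime to $q$ via the Ramanujan-sum identity $\sum_{(a,q)=1}e(au/q)=\sum_{d\mid(q,u)}\mu(q/d)d$. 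The term $u=0$ contributes $\ll HP^{1+\varepsilon}X^2$ (using $\varrho(0)\ll X^{2+\varepsilon}$ from Hua), while for $u\neq0$ the divisor structure collapses the $(q,h)$-sum to a bound $\ll P^\varepsilon$ per $u$, giving $\ll HP^\varepsilon X^4$. The savings of $H$ that you are missing come jointly from the $q^{-1}$ in the Gauss-sum bound, the $h^{-1}$ from integrating $|v_h|^2$, and the Ramanujan-sum cancellation---none of which survive once you revert to the unsmoothed sum $G$.
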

First of all, we use Lemma \ref{AE-lemma} to give the expected estimate of the integral on the right--hand side of (\ref{S_2-upper-3}) and prove it afterwards. Taking $X=Q$ and $f(\alpha)=f_k(\alpha,Q)$ in Lemma \ref{AE-lemma}, then it follows from (\ref{Hua-lemma-4th}), Lemma \ref{AE-lemma} and Cauchy's inequality that
\begin{align}\label{yong-lemma}
             \int_{\mathfrak{M}}G_1(\alpha)\big|f(\alpha)\big|^4\mathrm{d}\alpha
  \ll & \,\, \bigg(\int_{\mathfrak{M}}\big|G_1^2(\alpha)f^4(\alpha)\big|\mathrm{d}\alpha \bigg)^{\frac{1}{2}}
             \bigg(\int_0^1\big|f(\alpha)\big|^4\mathrm{d}\alpha\bigg)^{\frac{1}{2}}
                        \nonumber \\
  \ll & \,\, \Big(HP^\varepsilon\big(PQ^2+Q^4\big)\Big)^{\frac{1}{2}}\big(Q^{2+\varepsilon}\big)^{\frac{1}{2}}
             \ll H^{\frac{1}{2}}P^\varepsilon \big(P^{\frac{1}{2}}Q+Q^2\big)Q
                        \nonumber \\
  \ll & \,\, P^\varepsilon\big(H^{\frac{1}{2}}P^{\frac{1}{2}}Q^2+H^{\frac{1}{2}}Q^3\big)\ll P^{1+\varepsilon}Q^2.
\end{align}
From (\ref{S_1-upper}), (\ref{S_2-upper-3}) and (\ref{yong-lemma}), we derive the conclusion of Theorem \ref{AE-Thm}.

\textbf{\textit{Proof of Lemma \ref{AE-lemma}.}} By Theorem 7.1 of Vaughan \cite{Vaughan-book}, it is easy to see that
\begin{equation*}
  \sigma_h(q,a)\ll q^{\frac{1}{2}+\varepsilon}(q,h)^{\frac{1}{2}}.
\end{equation*}
For $\alpha\in\mathfrak{M}(q,a)$, it follows from Cauchy's inequality that
\begin{align}\label{G_1-upp-1}
          \big|G_1(\alpha)\big|^2
 = & \,\, \Bigg|\sum_{0<h<H}q^{-1}\sigma_h(q,a)v_h(\lambda)\Bigg|^2
          \ll P^\varepsilon \Bigg|\sum_{0<h<H}\frac{(q,h)^{1/2}}{q^{1/2}}\big|v_h(\lambda)\big|\Bigg|^2
                    \nonumber \\
 \ll & \,\, P^\varepsilon\Bigg(\sum_{0<h<H}\frac{1}{q}\Bigg)\Bigg(\sum_{0<h<H}(q,h)\big|v_h(\lambda)\big|^2\Bigg)
                     \nonumber \\
 \ll & \,\, P^\varepsilon Hq^{-1}\sum_{0<h<H}(q,h)\big|v_h(\lambda)\big|^2.
\end{align}
By the standard estimate
\begin{equation*}
  v_h(\lambda)\ll \frac{P}{1+P^2h|\lambda|},
\end{equation*}
which combines (\ref{G_1-upp-1}) to give
\begin{align*}
             \int_{\mathfrak{M}}\big|G_1^2(\alpha)f_k^4(\alpha,X)\big|\mathrm{d}\alpha
 \ll  & \,\, P^\varepsilon H\sum_{0<h<H}\int_{\mathfrak{M}}\frac{(q,h)}{q}\big|v_h(\lambda)\big|^2
             \big|f_k^4(\alpha,X)\big|\mathrm{d}\alpha
                  \nonumber \\
 \ll & \,\, P^\varepsilon H\sum_{0<h<H}\sum_{1\leqslant q\leqslant P}\sum_{\substack{a=1\\ (a,q)=1}}^q\frac{(q,h)}{q}
             \int_{-\frac{1}{qPH}}^{\frac{1}{qPH}}\big|v_h(\lambda)\big|^2
                  \nonumber \\
   & \,\, \times \sum_{X<x_1,\dots,x_4\leqslant2X}
             e\Bigg(\bigg(\frac{a}{q}+\lambda\bigg)\big(x_1^k+x_2^k-x_3^k-x_4^k\big)\Bigg)\mathrm{d}\lambda.
\end{align*}
Setting $u=x_1^k+x_2^k-x_3^k-x_4^k$, then
\begin{align}\label{S_2-upper-4}
     & \,\, \int_{\mathfrak{M}}\big|G_1^2(\alpha)f_k^4(\alpha,X)\big|\mathrm{d}\alpha
                    \nonumber \\
 \ll & \,\, P^\varepsilon H\sum_{0<h<H}\sum_{1\leqslant q\leqslant P}\sum_{\substack{a=1\\ (a,q)=1}}^q\frac{(q,h)}{q}
            \sum_{u}\varrho(u)\int_{-\frac{1}{qPH}}^{\frac{1}{qPH}}\big|v_h(\lambda)\big|^2
            e\bigg(\bigg(\frac{a}{q}+\lambda\bigg)u\bigg)\mathrm{d}\lambda
                  \nonumber \\
  \ll & \,\, P^\varepsilon H\sum_{0<h<H}\sum_{1\leqslant q\leqslant P}\frac{(q,h)}{q}\sum_{u}\varrho(u)
             \Bigg|\sum_{\substack{a=1\\ (a,q)=1}}^qe\bigg(\frac{au}{q}\bigg)\Bigg|
             \int_{-\frac{1}{qPH}}^{\frac{1}{qPH}}\big|v_h(\lambda)\big|^2\mathrm{d}\lambda
                  \nonumber \\
  \ll & \,\, P^\varepsilon H\sum_{0<h<H}\sum_{1\leqslant q\leqslant P}\frac{(q,h)}{q}\sum_{u}\varrho(u)
             \Bigg|\sum_{\substack{a=1\\ (a,q)=1}}^qe\bigg(\frac{au}{q}\bigg)\Bigg|
             \int_{|\lambda|\leqslant\frac{1}{PH}}\frac{P^2}{(1+P^2h|\lambda|)^2}\mathrm{d}\lambda
                   \nonumber \\
  \ll & \,\, P^\varepsilon H\sum_{0<h<H}\sum_{1\leqslant q\leqslant P}\frac{(q,h)}{q}\sum_{u}\varrho(u)
             \Bigg|\sum_{\substack{a=1\\ (a,q)=1}}^qe\bigg(\frac{au}{q}\bigg)\Bigg|
                   \nonumber \\
    & \,\,   \qquad \qquad \times\Bigg(\int_{|\lambda|\leqslant\frac{1}{P^2h}}P^2\mathrm{d}\lambda+
             \int_{\frac{1}{P^2h}<|\lambda|\leqslant\frac{1}{PH}}\frac{P^2}{P^4h^2|\lambda|^2}\mathrm{d}\lambda\Bigg)
                    \nonumber \\
  \ll & \,\, P^\varepsilon H\sum_{0<h<H}\sum_{1\leqslant q\leqslant P}\frac{(q,h)}{qh}\sum_{u}\varrho(u)
             \Bigg|\sum_{\substack{a=1\\ (a,q)=1}}^qe\bigg(\frac{au}{q}\bigg)\Bigg|,
\end{align}
where $\varrho(u)$ denotes the number of solutions of $u=x_1^k+x_2^k-x_3^k-x_4^k$ with $X<x_i\leqslant2X\,(i=1,2,3,4)$. By
Hua's lemma (Lemma 2.5 of Vaughan \cite{Vaughan-book}), we have $\varrho(0)\ll X^{2+\varepsilon}$. For $u\not=0$, it follows from Theorem 271 of Hardy and Wright \cite{Hardy-Wright-1979} that
\begin{equation*}
  \sum_{\substack{a=1\\(a,q)=1}}^qe\bigg(\frac{au}{q}\bigg)=\sum_{d|(q,u)}\mu\bigg(\frac{q}{d}\bigg)d.
\end{equation*}
Thus, the right--hand side of (\ref{S_2-upper-4}) is bounded by
\begin{align}\label{S_2-upper-5}
  \ll & \,\, P^\varepsilon H\Bigg(X^2\sum_{0<h<H}\sum_{1\leqslant q\leqslant P}\frac{(q,h)}{h}
             +\sum_{0<h<H}\sum_{1\leqslant q\leqslant P}\frac{(q,h)}{qh}\sum_{u\not=0}\varrho(u)\sum_{d|(q,u)}d\Bigg)
                            \nonumber \\
  \ll & \,\, P^\varepsilon H\big(\Sigma_1+\Sigma_2\big),
\end{align}
say. Writing $r=(q,h)$, then $q=rq_1,h=rh_1$ with $(q_1,h_1)=1$. Thus, we have
\begin{align}\label{Sigma_1-upper}
            \Sigma_1
 \ll & \,\, X^2\sum_{1\leqslant r\leqslant P}\sum_{1\leqslant h_1<H/r}\sum_{1\leqslant q_1\leqslant P/r}\frac{1}{h_1}
                           \nonumber \\
 \ll & \,\, X^2\Bigg(\sum_{1\leqslant r\leqslant P}\frac{P}{r}\Bigg)\Bigg(\sum_{1\leqslant h_1\leqslant H/r}\frac{1}{h_1}\Bigg)
                           \nonumber \\
 \ll & \,\, X^2P\Bigg(\sum_{1\leqslant r\leqslant P}\frac{\log(H/r)}{r}\Bigg)\ll X^2P^{1+\varepsilon}.
\end{align}
For $\Sigma_2$, by the same transformation, we obtain
\begin{equation*}
  \Sigma_2\ll \sum_{1\leqslant r\leqslant P}\sum_{1\leqslant h_1<H/r}\sum_{1\leqslant q_1\leqslant P/r}\frac{1}{q_1h_1r}
              \sum_{u\not=0} \varrho(u)\sum_{d|(rq_1,u)}d.
\end{equation*}
We first consider the inner double sums over $u$ and $d$, and see that $d|(rq_1,u)$ implies $d|u$ and $rq_1=ds$ for some integer $s$. Moreover, for fixed $d$ and $s$, there exist $O(P^\varepsilon)$ solutions of $rq_1=ds$ in integer variables $r$
and $q_1$. Hence, we deduce that
\begin{align}\label{Sigma_2-upper}
            \Sigma_2
 \ll & \,\, P^\varepsilon\sum_{u\not=0}\varrho(u)\sum_{d|u}d\sum_{s\leqslant P}\sum_{h_1<H}\frac{1}{h_1ds}
                 \nonumber \\
 \ll & \,\, P^\varepsilon\sum_{u\not=0}\varrho(u)\Bigg(\sum_{d|u}1\Bigg)\Bigg(\sum_{s\leqslant P}\frac{1}{s}\Bigg)
            \Bigg(\sum_{h_1<H}\frac{1}{h_1}\Bigg)
                 \nonumber \\
 \ll & \,\, P^\varepsilon\sum_{u\not=0}\varrho(u)\ll X^4P^\varepsilon.
\end{align}
Combining (\ref{S_2-upper-4}), (\ref{S_2-upper-5}), (\ref{Sigma_1-upper}) and (\ref{Sigma_2-upper}), we get the conclusion of Lemma \ref{AE-lemma}.

From Theorem \ref{AE-Thm} and the Theorem of Vaughan \cite{Vaughan-1985}, we obtain the following corollary.

\begin{corollary}\label{AE-corollary}
For  $k\geqslant3$, the pairs $(3;1),(3;\frac{5}{6}),(k;\frac{5}{6})$ form admissible exponents.
\end{corollary}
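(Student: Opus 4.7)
\textbf{Proof plan for Corollary \ref{AE-corollary}.} The case $k=3$ is exactly the theorem of Vaughan \cite{Vaughan-1985}, so I may assume $k\geqslant 4$, which is the range in which Theorem \ref{AE-Thm} has been established. Writing $P_1=N^{1/3}$, $P_2=N^{5/18}$, $P_3=N^{5/(6k)}$, the task is to prove
\begin{equation*}
  I:=\int_0^1 \bigl|f_3(\alpha,P_1)\bigr|^2 \bigl|f_3(\alpha,P_2)\bigr|^2 \bigl|f_k(\alpha,P_3)\bigr|^2 \mathrm{d}\alpha \ll P_1 P_2 P_3 N^\varepsilon.
\end{equation*}

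The natural idea is to split the two copies of $|f_3(\alpha,P_1)|$ between the two remaining factors and apply Cauchy's inequality. Factoring the integrand as $\bigl(|f_3(\alpha,P_1)|\,|f_3(\alpha,P_2)|^2\bigr)\cdot \bigl(|f_3(\alpha,P_1)|\,|f_k(\alpha,P_3)|^2\bigr)$ and invoking Cauchy's inequality gives
\begin{equation*}
  I \leqslant \left(\int_0^1 \bigl|f_3(\alpha,P_1)\bigr|^2\bigl|f_3(\alpha,P_2)\bigr|^4 \mathrm{d}\alpha\right)^{\!1/2}
            \left(\int_0^1 \bigl|f_3(\alpha,P_1)\bigr|^2\bigl|f_k(\alpha,P_3)\bigr|^4 \mathrm{d}\alpha\right)^{\!1/2}.
\end{equation*}

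Now each of the two integrals is precisely of the form encoded by an admissible exponent assertion. The first integral counts solutions of $x_1^3+y_1^3+y_2^3=x_2^3+y_3^3+y_4^3$ with $P_1<x_i\leqslant 2P_1$ and $P_2<y_j\leqslant 2P_2$, so the theorem of Vaughan \cite{Vaughan-1985} (i.e.\ the fact that $(3;1),(3;\tfrac{5}{6}),(3;\tfrac{5}{6})$ form admissible exponents) yields
\begin{equation*}
  \int_0^1 \bigl|f_3(\alpha,P_1)\bigr|^2\bigl|f_3(\alpha,P_2)\bigr|^4 \mathrm{d}\alpha \ll P_1 P_2^2 N^\varepsilon.
\end{equation*}
The second integral counts solutions of $x_1^3+y_1^k+y_2^k=x_2^3+y_3^k+y_4^k$ with $P_1<x_i\leqslant 2P_1$ and $P_3<y_j\leqslant 2P_3$, and is therefore bounded by Theorem \ref{AE-Thm} as
\begin{equation*}
  \int_0^1 \bigl|f_3(\alpha,P_1)\bigr|^2\bigl|f_k(\alpha,P_3)\bigr|^4 \mathrm{d}\alpha \ll P_1 P_3^2 N^\varepsilon.
\end{equation*}
Multiplying the square roots of these two estimates gives $I\ll P_1P_2P_3 N^\varepsilon$, as required.

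I do not anticipate any real obstacle: the only thing to check carefully is that the Cauchy split pairs the right number of cubes with the right number of $k$-th powers so that each resulting mean value is covered by either Vaughan's theorem or the already proved Theorem \ref{AE-Thm}. The symmetric allocation above does this cleanly, and the bookkeeping in the exponents is trivial since the two Cauchy factors contribute $P_1^{1/2}P_2$ and $P_1^{1/2}P_3$, whose product is precisely $P_1P_2P_3$.
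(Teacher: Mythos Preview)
Your proof is correct and follows essentially the same route as the paper: handle $k=3$ by Vaughan's theorem directly, and for $k\geqslant4$ apply Cauchy's inequality to split $|f_3(\alpha,P_1)|^2|f_3(\alpha,P_2)|^2|f_k(\alpha,P_3)|^2$ into the product of $|f_3(\alpha,P_1)|^2|f_3(\alpha,P_2)|^4$ and $|f_3(\alpha,P_1)|^2|f_k(\alpha,P_3)|^4$, bounding the first by Vaughan's theorem and the second by Theorem~\ref{AE-Thm}. The bookkeeping and the resulting estimate $I\ll P_1P_2P_3N^\varepsilon$ match the paper's argument exactly.
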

\textbf{\textit{Proof of corollary \ref{AE-corollary}.}} For $k=3$, the conclusion follows from the Theorem of Vaughan \cite{Vaughan-1985}. For $k\geqslant4$, by the Theorem of Vaughan \cite{Vaughan-1985}, Theorem \ref{AE-Thm} and Cauchy's
inequality, we deduce that
\begin{align*}
   & \,\,  \int_0^1\Big|f_3\big(\alpha,N^{\frac{1}{3}}\big)f_3\big(\alpha,N^{\frac{5}{18}}\big)
           f_k\big(\alpha,N^{\frac{5}{6k}}\big)\Big|^2\mathrm{d}\alpha
                 \nonumber \\
\ll & \,\,  \Bigg(\int_0^1\Big|f_3\big(\alpha,N^{\frac{1}{3}}\big)f_3^2\big(\alpha,N^{\frac{5}{18}}\big)\Big|^2
            \mathrm{d}\alpha\Bigg)^\frac{1}{2}
            \Bigg(\int_0^1\Big|f_3\big(\alpha,N^{\frac{1}{3}}\big)f_k^2\big(\alpha,N^{\frac{5}{6k}}\big)\Big|^2
            \mathrm{d}\alpha\Bigg)^\frac{1}{2}
                   \nonumber \\
\ll & \,\,  \big(N^{\frac{1}{3}+\frac{5}{18}+\frac{5}{18}+\varepsilon}\big)^{\frac{1}{2}}
            \big(N^{\frac{1}{3}+\frac{5}{6k}+\frac{5}{6k}+\varepsilon}\big)^{\frac{1}{2}}
             \ll N^{\frac{1}{3}+\frac{5}{18}+\frac{5}{6k}+\varepsilon},
\end{align*}
which implies $(3;1),(3;\frac{5}{6}),(k;\frac{5}{6})$ form admissible exponents for $k\geqslant4$.

\section{Proof of Theorem \ref{theorem-mixed}: Preliminaries}
In this section, we shall give some notations and preliminary lemmas. We always denote by $\chi$ a Dirichlet character $\pmod q$, and by $\chi^0$ the principal Dirichlet character$\pmod q$. Let
\begin{equation*}
A=10^{200}, \qquad  Q_0=\log^{50A}n, \qquad Q_1=n^{\frac{5}{9}-\frac{5}{6k}+50\varepsilon}, \qquad Q_2=n^{\frac{4}{9}+\frac{5}{6k}-50\varepsilon},
\end{equation*}
\begin{equation*}
D=n^{\frac{5}{8k}-\frac{1}{24}-51\varepsilon},\quad z=D^{\frac{1}{3}},\quad X_j=\frac{1}{2}\bigg(\frac{2n}{3}\bigg)^{\frac{1}{j}},\quad
X_j^*=\frac{1}{2}\bigg(\frac{2n}{3}\bigg)^{\frac{5}{6j}},\quad \mathscr{P}=\prod_{2<p<z}p,
\end{equation*}
\begin{equation*}
F_j(\alpha)=\sum_{X_j<m\leqslant 2X_j}e(m^j\alpha),\,\,\,\,
f_j(\alpha)=\sum_{X_j<p\leqslant2X_j}(\log p)e(p^j\alpha), \,\,\,\,  w_j(\lambda)=\int_{X_j}^{2X_j}e(\lambda u^j)\mathrm{d}u,
\end{equation*}
\begin{equation*}
F_j^*(\alpha)=\sum_{X_j^*<m\leqslant 2X_j^*}e(m^j\alpha),\,\,
f_j^*(\alpha)=\sum_{X_j^*<p\leqslant 2X_j^*}(\log p)e(p^j\alpha), \,\,
w_j^*(\lambda)=\int_{X_j^*}^{2X_j^*}e(\lambda u^j)\mathrm{d}u,
\end{equation*}
\begin{equation*}
 G_j(\chi,a)=\sum_{m=1}^q\chi(m)e\bigg(\frac{am^j}{q}\bigg),\quad S^*_j(q,a)=G_j(\chi^0,a), \quad S_j(q,a)=\sum_{m=1}^qe\bigg(\frac{am^j}{q}\bigg),
\end{equation*}
\begin{equation*}
h(\alpha)=\sum_{m\leqslant D^{2/3}}a(m)\sum_{s\leqslant D^{1/3}}
b(s)\sum_{\frac{X_2}{ms}<t\leqslant\frac{2X_2}{ms}}e\big((mst)^2\alpha\big),
\end{equation*}
\begin{equation*}
  B_d(q,n)=\sum_{\substack{a=1\\(a,q)=1}}^q S_2(q,ad^2)S_2^*(q,a)S_3^{*3}(q,a)S_k^*(q,a)e\bigg(-\frac{an}{q}\bigg),
\end{equation*}
\begin{equation*}
  B(q,n)=B_1(q,n),\qquad  A_d(q,n)=\frac{B_d(q,n)}{q\varphi^5(q)},\qquad  A(q,n)=A_1(q,n),
\end{equation*}
\begin{equation*}
  \mathfrak{S}_d(n)=\sum_{q=1}^\infty A_d(q,n),\qquad\quad \mathfrak{S}(n)=\mathfrak{S}_1(n),
\end{equation*}
\begin{equation*}
\mathcal{J}(n)=\int_{-\infty}^{+\infty}w_2^2(\lambda)w_3^2(\lambda)w_3^*(\lambda)w_k^*(\lambda)e(-n\lambda)\mathrm{d}\lambda,
\end{equation*}
\begin{equation*}
\mathcal{B}_r=\big\{m:\,X_2<m\leqslant2X_2,\,\,m=p_1p_2\cdots p_r,\,\, z\leqslant p_1\leqslant p_2\leqslant\cdots\leqslant p_r\big\},
\end{equation*}
\begin{equation*}
\mathcal{N}_r=\big\{m:\,\,m=p_1p_2\cdots p_{r-1},\,\, z\leqslant p_1\leqslant p_2\leqslant\cdots\leqslant p_{r-1},\,
p_1p_2\cdots p_{r-2}p_{r-1}^2\leqslant 2X_2\big\},
\end{equation*}
\begin{equation*}
g_r(\alpha)=\sum_{\substack{X_2<\ell p\leqslant2X_2\\ \ell\in\mathcal{N}_r}}\frac{\log p}{\log\frac{X_2}{\ell}}
e\Big(\alpha\big(\ell p\big)^2\Big),
 \quad  \log{\bf{\Xi}}=(\log2X_2)(\log2X_3)^2(\log2X_3^*)(\log2X_k^*),
\end{equation*}
\begin{equation*}
\log{\bf{\Theta}}=(\log X_3)^2(\log X_3^*)(\log X_k^*).
\end{equation*}

\begin{lemma}\label{Hua-compo}
For $(a,q)=1$, we have
\begin{eqnarray*}
  &  \emph{(i)} & S_{j}(q,a)\ll q^{1-\frac{1}{j}};   \\
  & \emph{(ii)} & G_{j}(\chi,a)\ll q^{\frac{1}{2}+\varepsilon}.
\end{eqnarray*}
In particular, for $(a,p)=1$, we have
\begin{eqnarray*}
  &  \emph{(iii)} &   |S_{j}(p,a)|\leqslant \big((j,p-1)-1\big)\sqrt{p};    \\
  & \emph{(iv)}   &   |S_{j}^*(p,a)|\leqslant \big((j,p-1)-1\big)\sqrt{p}+1;  \\
  & \emph{(v)}    &   S_{j}^*(p^\ell,a)=0\,\,\, \textrm{for}\,\,\, \ell\geqslant\gamma(p),\,\,\textrm{where}  \\
  &  &  \,\,\gamma(p)=
                   \begin{cases}
                        \theta+2, & \textrm{if}\,\,\, p^\theta\|j,\,\, p\not=2\,\,\,\textrm{or}\,\,\, p=2,\,\theta=0,   \\
                        \theta+3, & \textrm{if}\,\,\, p^{\theta}\|j,\,\,p=2,\,\,\theta>0.
                   \end{cases}
\end{eqnarray*}
\end{lemma}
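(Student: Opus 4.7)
The plan is to establish each part by a standard reduction to prime powers via multiplicativity of complete exponential sums, followed by an appeal to Weil's bound at $\ell = 1$ and a Hensel-type lifting argument for higher prime powers. All of these results are classical; the main work is bookkeeping, and I would primarily cite the relevant chapters of Vaughan's monograph.

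For (i), I would exploit the factorization $S_j(q_1 q_2, a) = S_j(q_1, a\overline{q_2})\, S_j(q_2, a\overline{q_1})$ when $(q_1,q_2)=1$, reducing matters to the prime power estimate $|S_j(p^\ell, a)| \ll p^{\ell(1-1/j)}$. When $\ell = 1$, Weil's bound for exponential sums over $\mathbb{F}_p$ yields the stronger $(j-1)\sqrt{p}$. When $\ell \geq 2$, I would use the substitution $x = u + p^{\lceil \ell/2 \rceil} v$, Taylor--expand the polynomial $a x^j$ modulo $p^\ell$, and separate the $v$-sum from the $u$-sum; this is essentially Theorem 7.1 of Vaughan's book, which I would cite. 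Part (ii) follows the same multiplicativity-plus-Weil pattern, the key input now being the classical bound $|G_j(\chi, a)| \ll p^{\ell(1/2 + \varepsilon)}$ at each prime power modulus; both the principal and non-principal cases are standard.

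Parts (iii) and (iv) are direct consequences of Weil in the prime case. For (iii), the map $x \mapsto x^j$ on $\mathbb{F}_p^*$ has image of size $(p-1)/(j, p-1)$, each value attained $(j, p-1)$ times, so $S_j(p,a)$ may be rewritten, up to the $x=0$ contribution, as $(j,p-1)$ times a character sum of degree $(j,p-1)-1$ in a primitive root, and Weil's bound gives $((j,p-1)-1)\sqrt{p}$. Part (iv) differs from (iii) only by the contribution of $x \equiv 0 \pmod p$, which is bounded by $1$, producing the extra $+1$.

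The subtlest part is (v). For $\ell \geq \gamma(p)$, I would perform the change of variable $x \mapsto x + p^{\ell - 1}y$ in $S_j^*(p^\ell, a)$; Taylor expansion shows the phase picks up the term $j a x^{j-1} y / p$, and summing over $y \in \mathbb{Z}/p\mathbb{Z}$ then forces the sum to vanish provided $p \nmid j a x^{j-1}$. Given $(a,x,p)=1$, this translates precisely into the $p$-adic condition on $j$ encoded in $\gamma(p)$. The dichotomy at $p = 2$ comes from the fact that the Taylor remainder involves $\binom{j}{2}$-type coefficients carrying an extra power of $2$, which shifts the threshold by one. I anticipate this is the only part requiring genuine case analysis, and the main obstacle will be verifying the threshold $\gamma(p)$ under the $p = 2$ branch exactly as stated; for this I would follow Hua's treatment (or the corresponding proposition in Vaughan's book) verbatim.
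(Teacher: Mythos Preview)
Your proposal is correct and aligns with the paper's treatment: the paper handles this lemma entirely by citation, pointing to Theorem~4.2 and Lemma~4.3 of Vaughan's monograph for (i), (iii)--(iv), to Lemma~8.5 of Hua (or Vinogradov) for (ii), and to Lemma~8.3 of Hua for (v). Your sketch of the underlying arguments (multiplicativity, Weil at primes, Hensel-type lifting for higher prime powers) is exactly what those references contain, so you are simply unpacking what the paper leaves as a black box; the only minor discrepancy is that you point to Theorem~7.1 rather than Theorem~4.2 of Vaughan for (i), but either route leads to the same bound.
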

\begin{proof}
  For (i) and (iii)--(iv), see Theorem 4.2 and Lemma 4.3 of Vaughan \cite{Vaughan-book}, respectively. For (ii), see Lemma 8.5 of Hua \cite{Hua-book} or
  the Problem 14 of Chapter VI of Vinogradov \cite{Vinogradov-book}. For (v), see Lemma 8.3 of Hua \cite{Hua-book}.
\end{proof}

\begin{lemma}\label{mean-value}
We have
\begin{align*}
  \textrm{\emph{(i)}} & \,\,\int_0^1\big|F_2(\alpha)F_3(\alpha)F_3^*(\alpha)F_k^*(\alpha)\big|^2\mathrm{d}\alpha
                       \ll n^{\frac{11}{9}+\frac{5}{3k}}(\log n)^c,
                           \nonumber \\
   \textrm{\emph{(ii)}} & \,\,\int_0^1\big|f_2(\alpha)f_3(\alpha)f_3^*(\alpha)f_k^*(\alpha)\big|^2\mathrm{d}\alpha
                       \ll n^{\frac{11}{9}+\frac{5}{3k}}(\log n)^{c+8},
\end{align*}
where $c$ is an absolute constant.
\end{lemma}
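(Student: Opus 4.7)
The plan is to prove both bounds by interpreting each integral as counting solutions to a Diophantine equation and splitting a ``diagonal'' from an ``off-diagonal'' contribution, the former handled by Corollary \ref{AE-corollary} and the latter by a divisor argument.

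For part (i), the integral $\int_0^1 |F_2 F_3 F_3^* F_k^*|^2 \mathrm{d}\alpha$ equals the number of $8$-tuples $(x_j,y_j)_{j=1}^4$ with $x_j,y_j$ in the relevant dyadic ranges and
\[
 x_1^2 + x_2^3 + x_3^3 + x_4^k \;=\; y_1^2 + y_2^3 + y_3^3 + y_4^k.
\]
Setting $D=y_2^3+y_3^3+y_4^k-x_2^3-x_3^3-x_4^k$, the equation becomes $x_1^2-y_1^2=D$, and we split according to whether $D=0$ or $D\neq 0$.

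The diagonal $D=0$ contributes $X_2$ choices for $x_1=y_1$ times the number of solutions to $x_2^3+x_3^3+x_4^k=y_2^3+y_3^3+y_4^k$, and the latter equals $\int_0^1 |F_3 F_3^* F_k^*|^2\mathrm{d}\alpha$. By Corollary \ref{AE-corollary}, this integral is $\ll X_3 X_3^* X_k^* n^\varepsilon$, so the diagonal contributes $\ll X_2 X_3 X_3^* X_k^* n^\varepsilon \ll n^{10/9+5/(6k)+\varepsilon}$, which is negligible compared with the target. For $D\neq 0$, since $(x_1-y_1)(x_1+y_1)=D$ with $|D|\ll n$, the number of admissible pairs $(x_1,y_1)$ is at most $\tau(|D|)$; writing $r(D)$ for the number of $6$-tuples $(x_i,y_i)_{i=2}^4$ yielding that value of $D$, the off-diagonal contribution is bounded by $\sum_{0<|D|\ll n} r(D)\tau(|D|)$. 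The crucial point is that $\sum_D r(D)=X_3^2 X_3^{*2} X_k^{*2}=n^{11/9+5/(3k)}$ trivially; expanding $\tau(|D|)=\sum_{d\mid D}1$, interchanging summation, and estimating the resulting congruence count $\sum_{d\mid D}r(D)$ via orthogonality and standard averages of multiplicative functions, one extracts only a polylogarithmic loss, yielding $\sum_D r(D)\tau(|D|)\ll n^{11/9+5/(3k)}(\log n)^{c}$ for some absolute $c$. Adding the two contributions gives (i).

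Part (ii) is established by the same dichotomy, replacing integers with primes in the dyadic ranges and carrying the additional $\log p_j$ weights throughout. The diagonal $x_1=y_1$ now contributes $\sum_{X_2<p\leqslant 2X_2}(\log p)^2$ times the prime analogue of the cubic integral, which differs from the integer one by at most $(\log n)^{6}$ extra factors (two $\log$ factors for each of the three prime exponential sums $f_3,f_3^*,f_k^*$); the off-diagonal term, with each variable carrying a weight bounded by $\log n$, loses a further $(\log n)^{2}$ for the square-prime pair. Altogether we pick up $(\log n)^{8}$ more than in part (i), yielding the stated $(\log n)^{c+8}$ bound.

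The main obstacle is the off-diagonal estimate: converting the pointwise bound $\tau(|D|)\ll n^\varepsilon$ into a genuine polylogarithmic saving requires handling $\sum_{d\ll n}N(d)$, where $N(d)$ counts $6$-tuples satisfying $y_2^3+y_3^3+y_4^k\equiv x_2^3+x_3^3+x_4^k \pmod{d}$; the quasi-uniform distribution of the cubic/$k$-th power sequence modulo $d$ must be exploited so that $\sum_d N(d)/d$ collapses to polylog rather than $n^\varepsilon$, a routine but technical use of standard orthogonality and divisor-sum manipulations.
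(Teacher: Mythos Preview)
Your reduction of (ii) to (i) by bounding the log-weights and passing to the larger set of integer tuples is correct and is exactly what the paper does. The disagreement is in part (i).

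The paper's proof of (i) is entirely different and much shorter: it never splits into diagonal and off-diagonal. Instead it pulls out $F_k^*$ by the trivial bound $|F_k^*(\alpha)|\ll n^{5/(6k)}$, then applies Cauchy--Schwarz to the remaining integral,
\[
\int_0^1 |F_2 F_3 F_3^*|^2\,\mathrm{d}\alpha
\;\leqslant\;\Big(\int_0^1|F_2|^4\Big)^{1/2}\Big(\int_0^1|F_3F_3^*|^4\Big)^{1/2},
\]
and invokes the explicit Hua inequality $\int|F_2|^4\ll X_2^2(\log X_2)^c$ together with Cai's bound $\int|F_3F_3^*|^4\ll n^{13/9}$. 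No divisor argument is needed; the polylogarithmic loss comes entirely from Hua's inequality. Corollary~\ref{AE-corollary} is not used here at all.

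In your approach the off-diagonal estimate is a genuine gap. You need $\sum_{D\neq 0}r(D)\tau(|D|)\ll (X_3X_3^*X_k^*)^2(\log n)^c$, and you assert this follows from ``routine'' orthogonality and divisor manipulations. But after expanding $\tau(|D|)=\sum_{d\mid D}1$ one must control $\sum_{d\leqslant\sqrt{n}}N(d)$ with $N(d)$ the number of $6$-tuples satisfying the congruence modulo $d$; here $d$ ranges up to $n^{1/2}$, which exceeds every individual variable range $X_3\asymp n^{1/3}$, $X_3^*\asymp n^{5/18}$, $X_k^*\asymp n^{5/(6k)}$, so the equidistribution of a single variable modulo $d$ is unavailable for large $d$. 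Obtaining $N(d)\ll (X_3X_3^*X_k^*)^2 \rho(d)/d$ with a controllable multiplicative factor $\rho(d)$ over the full range is not standard, and the trivial replacement $\tau(|D|)\ll n^\varepsilon$ gives only $n^{11/9+5/(3k)+\varepsilon}$, which is too weak: in the application (the bounds for $\mathfrak{I}_1,\mathfrak{I}_2$) the $(\log n)^c$ is essential because it must be beaten by $\log^{-100A}n$, and any power $n^\varepsilon$ would destroy the final saving. So as written, your off-diagonal step does not establish (i).
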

\begin{proof}
   By Lemma 2.4 of Cai \cite{Cai-2014}, we know that
\begin{equation*}
\int_0^1\big|F_3(\alpha)F_3^*(\alpha)\big|^4\mathrm{d}\alpha\ll n^{\frac{13}{9}}.
\end{equation*}
By the above estimate, Cauchy's inequality and the explicit form of Hua's inequality (see Theorem 4 on p. 19 of Hua \cite{Hua-book}), we deduce that
\begin{align}\label{mean-le}
     & \,\, \int_0^1\big|F_2(\alpha)F_3(\alpha)F_3^*(\alpha)\big|^2\mathrm{d}\alpha
                \nonumber \\
\ll & \,\,  \Bigg(\int_0^1\big|F_2(\alpha)\big|^4\mathrm{d}\alpha\Bigg)^{\frac{1}{2}}
            \Bigg(\int_0^1\big|F_3(\alpha)F_3^*(\alpha)\big|^4\mathrm{d}\alpha\Bigg)^{\frac{1}{2}}
                \nonumber \\
\ll & \,\,  \big(X_2^2(\log X_2)^c\big)^{\frac{1}{2}}\big(n^{\frac{13}{9}}\big)^{\frac{1}{2}}\ll n^{\frac{11}{9}}(\log n)^c.
\end{align}
Then (i) follows from (\ref{mean-le}) and the trivial estimate $|F_k^*(\alpha)|\ll n^{\frac{5}{6k}}$. Moreover, by considering
the number of solutions of the underlying Diophantine equation and the result of (i), we obtain the estimate (ii). This completes the proof of Lemma \ref{mean-value}.
\end{proof}

\begin{lemma}\label{W(a)-Delta-k}
  For $\alpha=\frac{a}{q}+\lambda$, define
\begin{equation}\label{M(q,a)-def}
   \mathcal{M}(q,a)=\bigg(\frac{a}{q}-\frac{1}{qQ_0},\frac{a}{q}+\frac{1}{qQ_0}\bigg],
\end{equation}
\begin{equation}\label{Delta_k-def}
   \Delta_k(\alpha)=f_k(\alpha)-\frac{S_k^*(q,a)}{\varphi(q)}\sum_{X_k<m\leqslant 2X_k}e(m^k\lambda),
\end{equation}
\begin{equation}\label{W(alpha)-def}
   \mathcal{W}(\alpha)=\sum_{d\leqslant D}\frac{c(d)}{dq}S_2(q,ad^2)w_2(\lambda),
\end{equation}
where
\begin{equation*}
  c(d)=\sum_{\substack{d=m\ell\\ m\leqslant D^{2/3}\\ \ell\leqslant D^{1/3}}}a(m)b(\ell)\ll \tau(d).
\end{equation*}
Then we have
\begin{equation*}
   \sum_{1\leqslant q\leqslant Q_0}
   \sum_{\substack{a=-q\\(a,q)=1}}^{2q}\int_{\mathcal{M}(q,a)}\big|\mathcal{W}(\alpha)\Delta_k(\alpha)\big|^2\mathrm{d}\alpha
   \ll n^{\frac{2}{k}}\log^{-100A}n.
\end{equation*}
\end{lemma}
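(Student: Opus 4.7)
The plan is to combine a pointwise Weyl-type bound for $\mathcal{W}(\alpha)$ with a Siegel--Walfisz bound for $\Delta_k(\alpha)$, and then to compensate for the unavoidable growth factor $1+X_k^k|\lambda|$ in the latter by exploiting the rapid decay of $|w_2(\lambda)|$ away from $\lambda=0$. The arithmetic identity $X_k^k\asymp X_2^2\asymp n$ is the reason this compensation succeeds.

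First I would establish
\[
|\mathcal{W}(\alpha)|\ll q^{-1/2+\varepsilon}|w_2(\lambda)|,\qquad \alpha=\frac{a}{q}+\lambda\in\mathcal{M}(q,a),
\]
by combining the Weyl-type bound $|S_2(q,ad^2)|\ll q^{1/2+\varepsilon}(q,d^2)^{1/2}$, the trivial estimate $|c(d)|\ll\tau(d)$, and a routine divisor-sum bound for $\sum_{d\le D}\tau(d)(q,d^2)^{1/2}/d$. Next, by decomposing $f_k(a/q+\lambda)$ over the reduced residue classes modulo $q$, invoking Siegel--Walfisz in each progression (permissible since $q\le Q_0=\log^{50A}n$), and carrying in the phase $e(u^k\lambda)$ by partial summation, I would derive
\[
|\Delta_k(a/q+\lambda)|\ll qX_k\exp(-c\sqrt{\log n})\,(1+X_k^k|\lambda|).
\]

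With these two bounds in hand, I would fix a parameter $L=\log^{200A}n$ and split each arc $\mathcal{M}(q,a)$ into an inner range $|\lambda|\le L/X_2^2$ and an outer range $L/X_2^2<|\lambda|\le 1/(qQ_0)$. On the inner range $X_k^k|\lambda|\ll L$ (since $X_k^k\le X_2^2$), so the Siegel--Walfisz estimate yields $|\Delta_k|\ll qX_kL\exp(-c\sqrt{\log n})\ll qX_k\log^{-B_1}n$ for any prescribed $B_1$, because $\exp(-c\sqrt{\log n})$ beats any fixed power of $\log n$; combined with the $L^2$-bound $\int_{\mathbb{R}}|w_2(\lambda)|^2\,d\lambda\ll 1$, the inner contribution to $\int|\mathcal{W}\Delta_k|^2\,d\lambda$ is at most $q^{1+\varepsilon}X_k^2\log^{-2B_1}n$. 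On the outer range I would use the trivial bound $|\Delta_k|\ll X_k$ together with the tail estimate $\int_{|\lambda|>L/X_2^2}|w_2(\lambda)|^2\,d\lambda\ll 1/L$, giving a contribution of $q^{-1+\varepsilon}X_k^2/L$.

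Summing both contributions against $\sum_{q\le Q_0}\varphi(q)\ll Q_0^2$, and choosing $B_1=150A$, absorbs the $q$-powers and produces the claimed bound $\ll n^{2/k}\log^{-100A}n$. The main obstacle is the delicate calibration of $L$: it must be large enough (as a log power) that $1/L$ overcomes the $Q_0^{1+\varepsilon}$ loss from the $q$-summation on the outer range, yet small enough that $L\exp(-c\sqrt{\log n})$ still yields a strong negative log power on the inner range. The crucial ingredient is the arithmetic identity $X_k^k\asymp X_2^2\asymp n$, which ensures that the blow-up factor $1+X_k^k|\lambda|$ in the Siegel--Walfisz bound for $\Delta_k$ is precisely matched by the decay of $|w_2(\lambda)|$; without this matching the growth in $|\Delta_k|$ could not be absorbed.
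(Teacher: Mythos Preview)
The paper does not supply its own proof of this lemma; it simply cites Lemma~2.4 of Li and Cai~\cite{Li-Cai-2016}. Your self-contained argument is correct and follows the standard route for such estimates. The pointwise bound $|\mathcal{W}(\alpha)|\ll \tau_3(q)q^{-1/2}|w_2(\lambda)|\log^2 n$ (which the paper itself records as~(\ref{W(a)-upper}), and which is your $q^{-1/2+\varepsilon}|w_2(\lambda)|$ once one notes that $q\le Q_0$ is a log power) and the Siegel--Walfisz bound $|\Delta_k|\ll qX_k(1+X_k^k|\lambda|)\exp(-c\sqrt{\log n})$ are both valid, and your splitting at $|\lambda|=L/X_2^2$ with $L=\log^{200A}n$ correctly balances the inner and outer ranges; the identification $X_k^k\asymp X_2^2\asymp n$ is indeed the mechanism that lets the decay of $w_2$ absorb the partial-summation loss in $\Delta_k$. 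One minor bookkeeping point: after summing over $a$ and $q$ the inner-range contribution actually acquires a factor $\sum_{q\le Q_0}\varphi(q)\,q^{1+\varepsilon}\ll Q_0^{3+\varepsilon}$ rather than $Q_0^2$, but since $B_1$ may be taken arbitrarily large (as $\exp(-c\sqrt{\log n})$ beats any fixed log power) this costs nothing, and the final bound $\ll n^{2/k}\log^{-100A}n$ follows with room to spare.
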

\begin{proof}
 See Lemma 2.4 of Li and Cai \cite{Li-Cai-2016}.
\end{proof}

\begin{lemma}\label{two-mean}
   For $\alpha=\frac{a}{q}+\lambda$, define
   \begin{equation}\label{V_k-def}
      \mathcal{V}_k(\alpha)=\frac{S_k^*(q,a)}{\varphi(q)}w_k(\lambda).
   \end{equation}
 Then we have
 \begin{equation*}
   \sum_{1\leqslant q\leqslant Q_0}
   \sum_{\substack{a=-q\\(a,q)=1}}^{2q}\int_{\mathcal{M}(q,a)}\big|\mathcal{V}_k(\alpha)\big|^2\mathrm{d}\alpha
   \ll n^{\frac{2}{k}-1}\log^{21A}n,
\end{equation*}
and
\begin{equation*}
   \sum_{1\leqslant q\leqslant Q_0}
   \sum_{\substack{a=-q\\(a,q)=1}}^{2q}\int_{\mathcal{M}(q,a)}\big|\mathcal{W}(\alpha)\big|^2\mathrm{d}\alpha
   \ll \log^{21A}n,
\end{equation*}
where $\mathcal{M}(q,a)$ and $\mathcal{W}(\alpha)$ are defined by (\ref{M(q,a)-def}) and (\ref{W(alpha)-def}), respectively.
\end{lemma}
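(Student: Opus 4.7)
My strategy is to treat the two inequalities in parallel: on each arc $\mathcal{M}(q,a)$, the integrand factors as an arithmetic piece (independent of $\lambda$) times $|w_2(\lambda)|^2$ or $|w_k(\lambda)|^2$, so I would carry out the $\lambda$-integration first using standard oscillatory-integral bounds, and then estimate the remaining sums over $(q,a)$ with $q\leqslant Q_0$, $(a,q)=1$.

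For the first inequality, I would write
\begin{equation*}
\int_{\mathcal{M}(q,a)}\big|\mathcal{V}_k(\alpha)\big|^2\,\mathrm{d}\alpha
=\frac{|S_k^*(q,a)|^2}{\varphi(q)^2}\int_{-1/(qQ_0)}^{1/(qQ_0)}\big|w_k(\lambda)\big|^2\,\mathrm{d}\lambda.
\end{equation*}
By the first derivative test, $w_k(\lambda)\ll X_k(1+X_k^k|\lambda|)^{-1}$; splitting the $\lambda$-range at $|\lambda|=X_k^{-k}$ and using $qQ_0\leqslant Q_0^2\ll n=X_k^k$, the inner integral is $\ll X_k^{2-k}\ll n^{2/k-1}$, uniformly in $q$. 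For the arithmetic factor, I would invoke Lemma \ref{Hua-compo}(ii), which gives $|S_k^*(q,a)|\ll q^{1/2+\varepsilon}$, combined with $\varphi(q)\gg q/\log\log q$ and multiplicativity of $q\mapsto\sum_{(a,q)=1}|S_k^*(q,a)|^2/\varphi(q)^2$, to bound $\sum_{q\leqslant Q_0}\sum_{(a,q)=1}|S_k^*(q,a)|^2/\varphi(q)^2$ by a polylog, yielding the required $n^{2/k-1}\log^{21A}n$.

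For the second inequality I would expand the square,
\begin{equation*}
\big|\mathcal{W}(\alpha)\big|^2=\frac{|w_2(\lambda)|^2}{q^2}\sum_{d_1,d_2\leqslant D}\frac{c(d_1)\overline{c(d_2)}}{d_1 d_2}S_2(q,ad_1^2)\overline{S_2(q,ad_2^2)}.
\end{equation*}
The $\lambda$-integration gives $\int_{-1/(qQ_0)}^{1/(qQ_0)}|w_2(\lambda)|^2\,\mathrm{d}\lambda\ll 1$ (the case $k=2$ of the previous bound, since $X_2^{2-k}=1$). For the Gauss sum I would use the factorisation $S_2(q,ad^2)=(d^2,q)\,S_2\!\big(q/(d^2,q),b\big)$ with $(b,q/(d^2,q))=1$ together with $|S_2(q',b)|\ll(q')^{1/2}$, giving $|S_2(q,ad^2)|\ll\bigl(q\,(d^2,q)\bigr)^{1/2}$, and $|c(d)|\leqslant\tau(d)$. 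Summing first in $a$ (exploiting the Ramanujan-sum cancellation that arises from the phase $e(a(d_1^2m_1^2-d_2^2m_2^2)/q)$ once one opens $|S_2|^2$) and then in $q\leqslant Q_0$ via the multiplicativity of the resulting arithmetic function and standard divisor mean-value estimates should produce the required $\log^{21A}n$.

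The main obstacle in either half is the log-power bookkeeping. A trivial application of $|S_k^*(q,a)|\ll q^{1/2+\varepsilon}$ together with a rough sum over $(q,a)$ yields only $Q_0^{1+\varepsilon}=\log^{50A(1+\varepsilon)}n$, which is much weaker than the target $\log^{21A}n$. The saving must therefore come from carrying out the $a$-summation \emph{before} the $q$-summation, so that the orthogonality/Ramanujan-sum cancellation in $a$ collapses the arithmetic sum to a multiplicative function of $q$ whose $q$-mean is only polylogarithmic. This matches the philosophy of Lemma \ref{W(a)-Delta-k} above, whose proof is deferred to Li--Cai \cite{Li-Cai-2016}.
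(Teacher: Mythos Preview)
The paper does not actually prove this lemma; its entire proof reads ``See Lemma 2.5 of Li and Cai \cite{Li-Cai-2016}.'' So there is no argument in the paper to compare against, and your sketch must be judged on its own merits.

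Your outline for the first inequality is sound. The $\lambda$--integration via $w_k(\lambda)\ll X_k(1+X_k^k|\lambda|)^{-1}$ indeed yields $\ll X_k^{2-k}\asymp n^{2/k-1}$, and your appeal to the multiplicativity of $q\mapsto\sum_{(a,q)=1}|S_k^*(q,a)|^2/\varphi(q)^2$ is the right move. One small correction to your diagnosis in the final paragraph: for this part the saving is not really ``Ramanujan--sum cancellation'' but rather the fact that $|S_k^*(p,a)|$ is \emph{typically} of size $O(1)$ rather than $p^{1/2}$. Indeed, for primes $p$ with $(k,p-1)=1$ the map $m\mapsto m^k$ permutes reduced residues, so $S_k^*(p,a)=-1$; together with Lemma~\ref{Hua-compo}(v) this forces the local factor at $p$ to be $1+O(1/p)$, whence $\sum_{q\leqslant Q_0}\sum_{(a,q)=1}|S_k^*(q,a)|^2/\varphi(q)^2\ll(\log Q_0)^{O_k(1)}=(\log\log n)^{O_k(1)}$, comfortably inside $\log^{21A}n$.

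For the second inequality your plan is also the right one, and here the situation is genuinely more delicate: the pointwise bound $|\mathcal{W}(\alpha)|\ll\tau_3(q)q^{-1/2}|w_2(\lambda)|\log^2 n$ established later in the paper (equation~(\ref{W(a)-upper})) gives only $\sum_{q\leqslant Q_0}\tau_3(q)^2\log^4 n\gg Q_0$, which is $\log^{50A}n$ and not enough. So your instinct to open the square, perform the $a$--sum first to produce Ramanujan sums $c_q(d_1^2m_1^2-d_2^2m_2^2)$, and then exploit multiplicativity in $q$ is exactly what is needed. The execution, however, is nontrivial---one has to control the double $d$--sum with weights $\tau(d_1)\tau(d_2)(d_i^2,q)$ while keeping track of the diagonal $d_1m_1=\pm d_2m_2$---and your proposal leaves this at the level of ``should produce''. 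That is precisely the computation deferred to \cite{Li-Cai-2016}, so your sketch and the paper are in the same position: both point to Li--Cai for the details.
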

\begin{proof}
 See Lemma 2.5 of Li and Cai \cite{Li-Cai-2016}.
\end{proof}

For $(a,q)=1,\,1\leqslant a\leqslant q\leqslant Q_2$, define
\begin{equation*}
  \mathscr{M}(q,a)=\bigg(\frac{a}{q}-\frac{1}{qQ_2},\frac{a}{q}+\frac{1}{qQ_2}\bigg],\qquad
  \mathscr{M}=\bigcup_{1\leqslant q\leqslant Q_0^5}\bigcup_{\substack{1\leqslant a\leqslant q\\(a,q)=1}}\mathscr{M}(q,a),
\end{equation*}
\begin{equation*}
  \mathscr{M}_0(q,a)=\bigg(\frac{a}{q}-\frac{Q_0}{n},\frac{a}{q}+\frac{Q_0}{n}\bigg],\qquad
  \mathscr{M}_0=\bigcup_{1\leqslant q\leqslant Q_0^5}\bigcup_{\substack{1\leqslant a\leqslant q\\(a,q)=1}}\mathscr{M}_0(q,a),
\end{equation*}
\begin{equation*}
  \mathcal{I}_0=\bigg(-\frac{1}{Q_2},1-\frac{1}{Q_2}\bigg],\quad\quad \quad \mathfrak{m}_0=\mathscr{M}\setminus \mathscr{M}_0,
\end{equation*}
\begin{equation*}
  \mathfrak{m}_1=\bigcup_{Q_0^5<q\leqslant Q_1}\bigcup_{\substack{1\leqslant a\leqslant q\\(a,q)=1}}\mathscr{M}(q,a),\quad\quad\quad
  \mathfrak{m}_2=\mathcal{I}_0\setminus (\mathscr{M}\cup \mathfrak{m}_1).
\end{equation*}
Then we obtain the Farey dissection
\begin{equation}\label{Farey-dissection}
  \mathcal{I}_0=\mathscr{M}_0\cup\mathfrak{m}_0\cup\mathfrak{m}_1\cup\mathfrak{m}_2.
\end{equation}

\begin{lemma}\label{substi-lemma}
   For $\alpha=\frac{a}{q}+\lambda$, define
\begin{equation*}
  \mathcal{V}_k^*(\alpha)=\frac{S_k^*(q,a)}{\varphi(q)}w_k^*(\lambda).
\end{equation*}
Then for $\alpha=\frac{a}{q}+\lambda\in\mathscr{M}_0$, we have
   \begin{align*}
      \emph{(i)} &\,\,  f_j(\alpha)=\mathcal{V}_j(\alpha)+O\big(X_j\exp(-\log^{1/3}n)\big),
                \nonumber \\
     \emph{(ii)} &\,\, f_j^*(\alpha)=\mathcal{V}_j^*(\alpha)+O\big(X_j^*\exp(-\log^{1/3}n)\big),
                \nonumber \\
    \emph{(iii)} &\,\, g_r(\alpha)=\frac{c_r(k)\mathcal{V}_2(\alpha)}{\log X_2}+O\big(X_2\exp(-\log^{1/3}n)\big),
   \end{align*}
where $\mathcal{V}_j(\alpha)$ is defined (\ref{V_k-def}), and
\begin{align}\label{c_r-def}
 c_r(k)  = &(1+O(\varepsilon)) \nonumber  \\
    & \times\int_{r-1}^{\frac{37k-15}{15-k}}\frac{\mathrm{d}t_1}{t_1}\int_{r-2}^{t_1-1}\frac{\mathrm{d}t_2}{t_2}\cdots
         \int_{3}^{t_{r-4}-1}\frac{\mathrm{d}t_{r-3}}{t_{r-3}}\int_{2}^{t_{r-3}-1}\frac{\log (t_{r-2}-1)}{t_{r-2}}\mathrm{d}t_{r-2}.
\end{align}
\end{lemma}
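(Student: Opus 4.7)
For parts (i) and (ii), I would follow the standard reduction to the Siegel--Walfisz theorem. The first step is to split $f_j(\alpha)$ into residue classes modulo $q$, isolating $(m,q)>1$ as an $O(q \log n)$ error:
\[
f_j(\alpha) = \sum_{\substack{m=1\\(m,q)=1}}^{q} e\!\left(\frac{am^j}{q}\right) \sum_{\substack{X_j < p \leq 2X_j \\ p \equiv m\,(\mathrm{mod}\,q)}} (\log p)\, e(\lambda p^j) + O(q\log n).
\]
Siegel--Walfisz provides $\psi(x;q,m) = x/\varphi(q) + O(x \exp(-c\sqrt{\log x}))$ uniformly for $q \leq (\log n)^C$ for any fixed $C$, which is ample since $q \leq Q_0^5$. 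Partial summation against $e(\lambda t^j)$---whose cumulative oscillation is controlled by $1 + |\lambda| X_j^j \leq 1 + Q_0$---would then convert the inner sum to $w_j(\lambda)/\varphi(q)$ with error $O(X_j Q_0 \exp(-c\sqrt{\log n}))$. Summing over $(m,q)=1$ produces the Gauss sum $S_j^*(q,a)$, hence $\mathcal{V}_j(\alpha)$, with aggregate error $O(X_j \exp(-\log^{1/3} n))$. The plan for (ii) is identical with $X_j^*$ replacing $X_j$ throughout.

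For part (iii), I would first interchange the order of summation to fix $\ell$:
\[
g_r(\alpha) = \sum_{\ell \in \mathcal{N}_r} \frac{1}{\log(X_2/\ell)} \sum_{X_2/\ell < p \leq 2X_2/\ell} (\log p)\, e(\alpha \ell^2 p^2).
\]
Since every prime factor of $\ell$ exceeds $z$---a positive power of $n$---while $q \leq Q_0^5$ is only polylogarithmic, we have $(q,\ell)=1$ on $\mathscr{M}_0$. The substitution $m \mapsto m\ell^{-1} \pmod q$ then gives $S_2^*(q,a\ell^2)=S_2^*(q,a)$, and the rescaling $v=\ell u$ yields $\int_{X_2/\ell}^{2X_2/\ell} e(\lambda \ell^2 u^2)\, \mathrm{d}u = \ell^{-1} w_2(\lambda)$. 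Applying the argument of (i) to each inner sum would then produce
\[
g_r(\alpha) = \mathcal{V}_2(\alpha) \sum_{\ell \in \mathcal{N}_r} \frac{1}{\ell \log(X_2/\ell)} + O\bigl(X_2 \exp(-\log^{1/3} n)\bigr),
\]
with the $\ell$-aggregated error absorbed via $\sum_\ell 1/\ell \ll \log n$.

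The remaining task---which I expect to be the main obstacle---is the evaluation
\[
\sum_{\ell \in \mathcal{N}_r} \frac{1}{\ell \log(X_2/\ell)} = \frac{c_r(k)}{\log X_2}\bigl(1+O(\varepsilon)\bigr).
\]
Writing $\ell = p_1 \cdots p_{r-1}$ with $z \leq p_1 \leq \cdots \leq p_{r-1}$ and $p_1 \cdots p_{r-2} p_{r-1}^2 \leq 2X_2$, I would iterate Mertens' formula $\sum_{u<p\leq v} 1/p = \log\log v - \log\log u + O(1/\log u)$ together with partial summation, converting each prime sum into an integral $\int \mathrm{d}t_i / t_i$ under the change of variable $t_i = (\log X_2)/(\log p_i)$. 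The ordering $p_1 \leq \cdots \leq p_{r-1}$ together with the product constraint translates into lower limits $t_{i} \geq r-i$ and upper limits $t_{i+1} \leq t_i - 1$, while the sieve bound $p_1 \geq z$ with $\log X_2/\log z = 36k/(15-k)$, combined with the shift imposed by the squared prime $p_{r-1}^2$, would give the outermost upper limit $(37k-15)/(15-k)$ and the innermost logarithmic factor $\log(t_{r-2}-1)/t_{r-2}$. The delicate point will be carefully propagating the $r-2$ successive Mertens error terms---each of size $O(1/\log z) = O((\log n)^{-1})$---through the iterated integration to recover exactly the factor $1+O(\varepsilon)$ in the definition (\ref{c_r-def}) of $c_r(k)$.
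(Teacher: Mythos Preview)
Your proposal is correct and follows precisely the approach the paper itself indicates: the paper's entire proof reads ``By some routine arguments and partial summation, (i)--(iii) follow from Siegel--Walfisz theorem and prime number theorem,'' and your outline is a faithful expansion of exactly this, with Mertens' formula playing the role of the prime number theorem in part (iii). One small caution on the bookkeeping in (iii): with the substitution $t_i=\log X_2/\log p_i$ and the ordering $p_1\leqslant\cdots\leqslant p_{r-1}$, the constraint $p_1\geqslant z$ gives $t_1\leqslant 36k/(15-k)$ rather than $(37k-15)/(15-k)=36k/(15-k)-1$, so the precise indexing of the integration variables (and which prime is integrated out first to produce the $\log(t_{r-2}-1)$ factor) needs a moment's care---but this is exactly the ``delicate point'' you already flagged, and the $(1+O(\varepsilon))$ in the definition of $c_r(k)$ gives you the slack to absorb such boundary adjustments.
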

\begin{proof}
By some routine arguments and partial summation, (i)--(iii) follow from Siegel--Walfisz theorem
and prime number theorem.
\end{proof}

\begin{lemma}\label{h(a)-upp}
  For $\alpha\in\mathfrak{m}_2$, we have
  \begin{equation*}
     h(\alpha)\ll n^{\frac{2}{9}+\frac{5}{12k}-24\varepsilon}.
  \end{equation*}
\end{lemma}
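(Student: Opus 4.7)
The plan is to bound $h(\alpha)$ on the minor arcs $\mathfrak{m}_2$ by exploiting its trilinear structure together with the Diophantine information available on $\mathfrak{m}_2$. Since $\alpha \in \mathfrak{m}_2 \subset \mathcal{I}_0$, Dirichlet's theorem provides a rational approximation $\alpha = a/q + \lambda$ with $(a,q) = 1$, $Q_1 < q \leq Q_2$ and $|\lambda| \leq 1/(qQ_2)$; this is the only arithmetic information available and the entire estimate must rely on the lower bound $q > Q_1$.

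My first step is to collapse the two outer variables by setting $d = ms$, which gives
\begin{equation*}
h(\alpha) = \sum_{d \leq D} c(d)\, T_d(\alpha), \qquad T_d(\alpha) = \sum_{X_2/d < t \leq 2X_2/d} e\bigl(\alpha\, d^2\, t^2\bigr),
\end{equation*}
with $c(d) = \sum_{d=ms,\, m \leq D^{2/3},\, s \leq D^{1/3}} a(m)\, b(s) \ll \tau(d) \ll n^\varepsilon$. This recasts $h$ as a weighted sum of quadratic exponential sums whose moduli are dilated by $d^2$. A pointwise application of Weyl's inequality to each $T_d(\alpha)$ using only $q > Q_1$ turns out to lose by a factor of roughly $D$ and therefore falls short; the bilinear cancellation between the $d$-sum and the $t$-sum must be captured.

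To capture this cancellation I would apply Cauchy--Schwarz in the $d$-variable,
\begin{equation*}
|h(\alpha)|^2 \ll D^{1+\varepsilon} \sum_{d \leq D} |T_d(\alpha)|^2,
\end{equation*}
and then open $|T_d(\alpha)|^2$ by Weyl differencing with shift $u = t_1 - t_2$, so that the remaining $t$-sum collapses to a geometric series yielding the standard factor $\min(X_2/d,\,\|2\alpha d^2 u\|^{-1})$. The double sum over $d$ and $u$ is then controlled by the classical $\min$--sum bound (Lemma 2.2 of Vaughan's book) applied after reducing $\alpha d^2 = a'/q' + \lambda d^2$ modulo the Dirichlet denominator $q$. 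Substituting the explicit values $X_2 \asymp n^{1/2}$, $D = n^{5/(8k) - 1/24 - 51\varepsilon}$, $Q_1 = n^{5/9 - 5/(6k) + 50\varepsilon}$, and $Q_2 = n^{4/9 + 5/(6k) - 50\varepsilon}$, and balancing the two resulting contributions $X_2 Q_1^{-1/2}$ and $Q_2^{1/2}$ should land precisely on the target exponent $2/9 + 5/(12k)$ (note that $Q_2^{1/2} = n^{2/9 + 5/(12k) - 25\varepsilon}$), with the $-24\varepsilon$ safely absorbing the $n^\varepsilon$ losses from divisor sums, Cauchy--Schwarz, and Weyl.

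The main obstacle is the bookkeeping of the dilation $d^2$: because $(q, d^2)$ can be as large as $d^2 \leq D^2$, and $D^2$ is comparable to $Q_1$ for the larger admissible $k$ in the range $3 \leq k \leq 14$, a crude reduction would wipe out the Diophantine strength of $q > Q_1$. This is handled by partitioning the $d$-sum according to the value of $(q, d^2)$, observing that for each fixed divisor $g \mid q$ the number of $d \leq D$ with $g \mid d^2$ is bounded by elementary divisor estimates, and verifying that the contribution from those $d$ with large common factor is dominated. A secondary technical point is that the Weyl-differencing step requires $X_2/d \geq 1$, which is true throughout the range of $d$ since $D \ll X_2$; and that the generous $\pm 50\varepsilon$ margins built into $Q_1$ and $Q_2$ leave ample room to swallow the $O(n^\varepsilon)$ losses that accumulate.
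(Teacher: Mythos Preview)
Your outline has a genuine gap: the two ``resulting contributions'' $X_2 Q_1^{-1/2}$ and $Q_2^{1/2}$ are the standard Weyl bounds for an \emph{unweighted} quadratic sum of length $X_2$, but that is not what your procedure produces. After Cauchy--Schwarz in $d$ you carry an extra factor $D^{1/2}$, and this is not recovered by the subsequent Weyl differencing. Indeed, even granting the optimistic per--$d$ bound $|T_d(\alpha)|^2\ll (X_2/d)^{2+\varepsilon}\bigl(q^{-1}+d/X_2+qd^2/X_2^2\bigr)$ with $(q,d^2)=1$, summation over $d\le D$ and multiplication by $D$ give
\[
|h(\alpha)|^2 \ll n^\varepsilon\bigl(DX_2^2 q^{-1}+DX_2+qD^2\bigr),
\]
hence $|h(\alpha)|\ll n^\varepsilon\bigl(D^{1/2}X_2Q_1^{-1/2}+D^{1/2}X_2^{1/2}+DQ_2^{1/2}\bigr)$. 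Since $D^{1/2}=n^{5/(16k)-1/48-O(\varepsilon)}$ and $5/(16k)-1/48>0$ for every $k\le 14$ (it vanishes only at $k=15$), the first and third terms strictly exceed $n^{2/9+5/(12k)}$ throughout the required range. Your dyadic min--sum treatment of the double $(d,u)$--sum leads to the same terms; there is no route from Cauchy--in--$d$ to the stated target. (Incidentally, the $(q,d^2)$ obstruction is most severe for \emph{small} $k$, not large $k$: for $k=3$ one has $D^2\asymp n^{1/3}>Q_1\asymp n^{5/18}$.)

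The paper bypasses all of this by quoting Lemma~4.2 of Br\"udern--Kawada \cite{Brudern-Kawada-2002}, which yields
\[
h(\alpha)\ll \frac{n^{1/2}\tau^2(q)\log^2 n}{(q+n|q\alpha-a|)^{1/2}}+n^{1/4+\varepsilon}D^{2/3},
\]
and then simply inserts $q>Q_1$. The point is that the second term carries $D^{2/3}$, not $D$: the Br\"udern--Kawada argument retains the trilinear structure $m\le D^{2/3}$, $s\le D^{1/3}$, $t\asymp X_2/(ms)$ and uses the $m$--range as the short bilinear variable. By collapsing $d=ms$ at the outset you have discarded precisely the feature that makes the error term small enough, and no amount of Weyl differencing afterwards can repair the loss.
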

\begin{proof}
  By the estimate (4.5) of Lemma 4.2 in Br\"{u}dern and Kawada \cite{Brudern-Kawada-2002}, we deduce that
\begin{align*}
   h(\alpha) \ll & \,\, \frac{n^{\frac{1}{2}}\tau^2(q)\log^2n}{(q+n|q\alpha-a|)^{1/2}}+n^{\frac{1}{4}+\varepsilon}D^{\frac{2}{3}}  \\
   \ll & \,\, n^{\frac{1}{2}+\varepsilon}Q_1^{-\frac{1}{2}}+n^{\frac{1}{4}+\varepsilon}D^{\frac{2}{3}}\ll n^{\frac{2}{9}+\frac{5}{12k}-24\varepsilon},
\end{align*}
which completes the proof of Lemma \ref{h(a)-upp}.
\end{proof}

\section{Mean Value Theorems}

In this section, we shall prove the mean value theorems for the proof of Theorem \ref{theorem-mixed}.
\begin{proposition}\label{mean-value-1}
For $3\leqslant k\leqslant 14$, define
\begin{equation*}
  J(n,d)=\sum_{\substack{m^2+p_1^2+p_2^3+p_3^3+p_4^3+p_5^k=n\\ X_2<m,\,p_1\leqslant2X_2, \,\, m\equiv0 \!\!\!\!\!\pmod d \\
               X_3<p_2,p_3\leqslant2X_3 \\ X_3^*<p_4\leqslant2X_3^* \\ X_k^*<p_5\leqslant2X_k^*}}
               \prod_{j=1}^5\log p_j.
\end{equation*}
Then we have
\begin{equation*}
  \sum_{m\leqslant D^{2/3}}a(m)\sum_{\ell\leqslant D^{1/3}}b(\ell)
     \bigg(J(n,m\ell)-\frac{\mathfrak{S}_{m\ell}(n)}{m\ell}\mathcal{J}(n)\bigg)
     \ll n^{\frac{17}{18}+\frac{5}{6k}}\log^{-A}n.
\end{equation*}
\end{proposition}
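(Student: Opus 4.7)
The plan is to apply the Hardy--Littlewood circle method together with the Farey dissection $\mathcal{I}_0 = \mathscr{M}_0 \cup \mathfrak{m}_0 \cup \mathfrak{m}_1 \cup \mathfrak{m}_2$ introduced in Section 3. By orthogonality, the weighted counting function on the left--hand side equals
\begin{equation*}
\mathcal{T}(n) := \int_{\mathcal{I}_0} h(\alpha) f_2(\alpha) f_3^2(\alpha) f_3^*(\alpha) f_k^*(\alpha) e(-n\alpha) \, \mathrm{d}\alpha,
\end{equation*}
since summing the divisor condition $m \equiv 0 \pmod{m\ell}$ inside $J(n, m\ell)$ against $a(m) b(\ell)$ reproduces the generating function $h(\alpha)$ exactly as defined in Section 3. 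It then suffices to show that the $\mathscr{M}_0$--contribution is $\sum a(m) b(\ell) \mathfrak{S}_{m\ell}(n) \mathcal{J}(n) / (m\ell)$ up to an admissible error, and that each of $\mathfrak{m}_0$, $\mathfrak{m}_1$, $\mathfrak{m}_2$ contributes $\ll n^{17/18 + 5/(6k)} \log^{-A} n$.

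On $\mathscr{M}_0$ I would invoke Lemma \ref{substi-lemma} to replace $f_j \mapsto \mathcal{V}_j$, $f_j^* \mapsto \mathcal{V}_j^*$ and $h \mapsto \mathcal{W}$; after multiplying out, extending the $\lambda$--integration to $\mathbb{R}$ and completing the $q$--sum, one extracts the expected product of singular series and singular integral. The errors from each substitution are absorbed by Cauchy--Schwarz together with Lemmas \ref{W(a)-Delta-k} and \ref{two-mean}, the sharpest pairing being $\Delta_k$ against $\mathcal{W}$, for which $\int_{\mathscr{M}_0} |\mathcal{W} \Delta_k|^2 \ll n^{2/k} \log^{-100A} n$ supplies the $\log^{-A} n$ decay. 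On the pruned arcs $\mathfrak{m}_0 \cup \mathfrak{m}_1$ I would combine these Siegel--Walfisz-type approximations with the mean value $\int_0^1 |f_2 f_3 f_3^* f_k^*|^2 \mathrm{d}\alpha \ll n^{11/9 + 5/(3k)} \log^c n$ from Lemma \ref{mean-value}(ii), paired via Cauchy--Schwarz with $\int_0^1 |f_3|^2 \ll X_3 \log X_3$ and a Weyl-type pointwise saving on $f_k^*$ (or on $h$) that becomes available as soon as $q > Q_0^5$.

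The decisive step is on the minor arcs $\mathfrak{m}_2$. Here I would apply Lemma \ref{h(a)-upp} to obtain $|h(\alpha)| \ll n^{2/9 + 5/(12k) - 24\varepsilon}$ uniformly, and then split the remaining factor by Cauchy--Schwarz,
\begin{equation*}
\int_0^1 |f_2 f_3^2 f_3^* f_k^*| \, \mathrm{d}\alpha \leqslant \Bigl(\int_0^1 |f_2 f_3|^2 \, \mathrm{d}\alpha\Bigr)^{1/2} \Bigl(\int_0^1 |f_3 f_3^* f_k^*|^2 \, \mathrm{d}\alpha\Bigr)^{1/2},
\end{equation*}
where the first factor is $\ll (X_2 X_3 n^\varepsilon)^{1/2}$ by the classical admissible exponent $(2;1),(3;1)$, and the second is $\ll (X_3 X_3^* X_k^* n^\varepsilon)^{1/2}$ by Corollary \ref{AE-corollary}. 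Multiplying through gives $n^{17/18 + 5/(6k) - 23\varepsilon}$, comfortably absorbing the required $\log^{-A} n$ factor. This minor-arc balance is precisely the main obstacle: without the refined admissible exponents of Theorem \ref{AE-Thm} (via Corollary \ref{AE-corollary}) the second mean value above would be too weak and the argument would collapse for the upper range of $3 \leqslant k \leqslant 14$, which is exactly why the entire auxiliary Section \ref{sec-AE} is required.
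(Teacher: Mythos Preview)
Your overall architecture is correct and matches the paper: the integral representation via $h(\alpha)f_2 f_3^2 f_3^* f_k^*$, the Farey dissection, the treatment of $\mathfrak{m}_2$ by Lemma~\ref{h(a)-upp} combined with Corollary~\ref{AE-corollary}, and the extraction of the main term on $\mathscr{M}_0$ via Lemma~\ref{substi-lemma} are all exactly as in the paper (your Cauchy split $\int|f_2f_3|^2\cdot\int|f_3f_3^*f_k^*|^2$ on $\mathfrak{m}_2$ gives the same bound as the paper's H\"older split $\int|f_2|^4\cdot\int|f_3|^4\cdot\int|f_3f_3^*f_k^*|^2$).

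The gap is in your treatment of $\mathfrak{m}_0\cup\mathfrak{m}_1$, where you have mislocated the key lemmas and misidentified the mechanism. On $\mathscr{M}_0$ Lemma~\ref{substi-lemma} already gives savings of $\exp(-\log^{1/3}n)$, so Lemmas~\ref{W(a)-Delta-k} and~\ref{two-mean} are not needed there; the paper simply bounds $\mathcal{K}-\mathcal{K}_0$ pointwise and multiplies by the measure of $\mathscr{M}_0$. Those two lemmas are used on $\mathfrak{m}_1$ (and analogously on $\mathfrak{m}_0$), and the argument there has three ingredients you omit. First, one replaces $h$ by $\mathcal{W}$ via Theorem~4.1 of Vaughan, with error $O(DQ_1^{1/2+\varepsilon})$; this is where the bilinear structure of $h$ is exploited. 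Second, one re-covers $\mathfrak{m}_1$ by wider Dirichlet arcs $\mathcal{M}(q,a)$ with $q\leqslant Q_0$, splitting further according to the size of $|\lambda|$; on the sub-arcs with $|\lambda|$ large, $\mathcal{W}$ itself is small pointwise by the estimate $\mathcal{W}(\alpha)\ll\tau_3(q)X_2 q^{-1/2}(1+|\lambda|n)^{-1}\log^2 n$. Third, on the remaining sub-arcs one decomposes a copy of $f_3$ (not $f_k^*$) as $\Delta_3+\mathcal{V}_3+O(1)$ and pairs $\mathcal{W}\Delta_3$ in $L^2$ via Lemma~\ref{W(a)-Delta-k} applied with the exponent $3$, against $(\int|f_2f_3f_3^*f_k^*|^2)^{1/2}$ from Lemma~\ref{mean-value}(ii); the pointwise bound $\sup_{\mathfrak{m}_1}|\mathcal{W}|\ll n^{1/2}\log^{-30A}n$ (coming from $q>Q_0^5$) handles the $\mathcal{V}_3$ piece. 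Your proposed ``Weyl-type pointwise saving on $f_k^*$'' is neither used nor available in adequate strength for the upper end of $3\leqslant k\leqslant 14$; the saving genuinely comes from $\mathcal{W}$ and from $\Delta_3$, not from the $k$-th power.
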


\begin{proof}
  Let
\begin{equation*}
  \mathcal{K}(\alpha)=h(\alpha)f_2(\alpha)f_3^2(\alpha)f_3^*(\alpha)f_k^*(\alpha)e(-n\alpha).
\end{equation*}
  By the Farey dissection (\ref{Farey-dissection}), we have
\begin{align}\label{mean-fenjie}
  &\,\, \sum_{m\leqslant D^{2/3}}a(m)\sum_{\ell\leqslant D^{1/3}}b(\ell)J(n,m\ell)
              \nonumber \\
  =& \,\,\int_{\mathcal{I}_0}\mathcal{K}(\alpha)\mathrm{d}\alpha
         =\bigg(\int_{\mathscr{M}_0}+\int_{\mathfrak{m}_0}+\int_{\mathfrak{m}_1}+\int_{\mathfrak{m}_2}\bigg)
           \mathcal{K}(\alpha)\mathrm{d}\alpha.
\end{align}
From Hua's lemma (see Lemma 2.5 of Vaughan \cite{Vaughan-book}), Corollary \ref{AE-corollary}  and H\"{o}lder's inequality,
we obtain
\begin{align}\label{mean-jun}
        &  \,\,\int_0^1 \big|f_2(\alpha)f_3^2(\alpha)f_3^*(\alpha)f_k^*(\alpha)\big|\mathrm{d}\alpha
                              \nonumber  \\
    \ll &  \,\,     \bigg(\int_0^1\big|f_2(\alpha)\big|^4\mathrm{d}\alpha \bigg)^{\frac{1}{4}}
                    \bigg(\int_0^1\big|f_3(\alpha)\big|^4\mathrm{d}\alpha \bigg)^{\frac{1}{4}}
                    \bigg(\int_0^1\big|f_3(\alpha)f_3^*(\alpha)f_k^*(\alpha)\big|^2\mathrm{d}\alpha \bigg)^{\frac{1}{2}}
                              \nonumber  \\
    \ll & \,\,  \big(X_2^{2+\varepsilon}\big)^{\frac{1}{4}}\big(X_3^{2+\varepsilon}\big)^{\frac{1}{4}}
                \big(n^{\frac{1}{3}+\frac{5}{18}+\frac{5}{6k}+\varepsilon}\big)^{\frac{1}{2}}
                \ll n^{\frac{13}{18}+\frac{5}{12k}+\varepsilon}.
\end{align}
  By Lemma \ref{h(a)-upp} and (\ref{mean-jun}), we obtain
\begin{align}\label{mean-m2}
           \int_{\mathfrak{m}_2}\mathcal{K}(\alpha)\mathrm{d}\alpha
    \ll &  \,\,\sup_{\alpha\in\mathfrak{m}_2}|h(\alpha)|\times \int_0^1 \big|f_2(\alpha)f_3^2(\alpha)f_3^*(\alpha)f_k^*(\alpha)\big|\mathrm{d}\alpha
                 \nonumber \\
    \ll &  \,\,n^{\frac{2}{9}+\frac{5}{12k}-24\varepsilon}\cdot n^{\frac{13}{18}+\frac{5}{12k}+\varepsilon}\ll
               n^{\frac{17}{18}+\frac{5}{6k}-23\varepsilon}.
\end{align}
For $\alpha\in\mathfrak{m}_1$, it follows from Theorem 4.1 in Vaughan \cite{Vaughan-book} that
\begin{equation}\label{h=W+E}
    h(\alpha)=\mathcal{W}(\alpha)+O\big(DQ_1^{\frac{1}{2}+\varepsilon}\big)
             =\mathcal{W}(\alpha)+O\big(n^{\frac{17}{72}+\frac{5}{24k}-25\varepsilon}\big),
\end{equation}
where $\mathcal{W}(\alpha)$ is defined by (\ref{W(alpha)-def}). Define
 \begin{equation*}
    \mathcal{K}_1(\alpha)=\mathcal{W}(\alpha)f_2(\alpha)f_3^2(\alpha)f_3^*(\alpha)f_k^*(\alpha)e(-n\alpha).
 \end{equation*}
 Then by (\ref{mean-jun}) and (\ref{h=W+E}) we have
\begin{equation}\label{K=K_1+E}
   \int_{\mathfrak{m}_1}\mathcal{K}(\alpha)\mathrm{d}\alpha
   =\int_{\mathfrak{m}_1}\mathcal{K}_1(\alpha)\mathrm{d}\alpha+O\big(n^{\frac{23}{24}+\frac{5}{8k}-24\varepsilon}\big).
\end{equation}
 Let
\begin{equation*}
   \mathcal{M}_0(q,a)=\bigg(\frac{a}{q}-\frac{1}{n^{\frac{25}{36}+\frac{5}{12k}}},\frac{a}{q}+\frac{1}{n^{\frac{25}{36}+\frac{5}{12k}}}\bigg],\qquad
  \mathcal{M}_0=\bigcup_{1\leqslant q\leqslant Q_0}\bigcup_{\substack{a=-q\\(a,q)=1}}^{2q}\mathcal{M}_0(q,a),
\end{equation*}
\begin{equation*}
   \mathcal{M}_1(q,a)=\mathcal{M}(q,a)\setminus\mathcal{M}_0(q,a),\qquad
    \mathcal{M}_1=\bigcup_{1\leqslant q\leqslant Q_0}\bigcup_{\substack{a=-q\\(a,q)=1}}^{2q}\mathcal{M}_1(q,a),
\end{equation*}
\begin{equation*}
    \mathcal{M}=\bigcup_{1\leqslant q\leqslant Q_0}\bigcup_{\substack{a=-q\\(a,q)=1}}^{2q}\mathcal{M}(q,a),
\end{equation*}
where $\mathcal{M}(q,a)$ is defined by (\ref{M(q,a)-def}). Then we have $\mathfrak{m}_1\subseteq\mathcal{I}_0\subseteq\mathcal{M}$.
By Dirichlet's theorem on Diophantine rational approximation, we obtain
\begin{align}\label{K_1-upper=1+2}
           \int_{\mathfrak{m}_1}\mathcal{K}_1(\alpha)\mathrm{d}\alpha
  \ll &   \,\, \sum_{1\leqslant q\leqslant Q_0}
           \sum_{\substack{a=-q\\(a,q)=1}}^{2q}\int_{\mathfrak{m}_1\cap\mathcal{M}_0(q,a)}|\mathcal{K}_1(\alpha)|\mathrm{d}\alpha
                    \nonumber \\
    &  \,\,+\sum_{1\leqslant q\leqslant Q_0}
    \sum_{\substack{a=-q\\(a,q)=1}}^{2q}\int_{\mathfrak{m}_1\cap\mathcal{M}_1(q,a)}|\mathcal{K}_1(\alpha)|\mathrm{d}\alpha .
\end{align}
By Lemma 4.2 of Titchmarsh \cite{Titchmarsh-book}, we have
\begin{equation*}
    w_j(\lambda)\ll \frac{X_j}{1+|\lambda|n},
\end{equation*}
from which and the trivial estimate $(q,d^2)\leqslant(q,d)^2$, we deduce that
\begin{align} \label{W(a)-upper}
   |\mathcal{W}(\alpha)|  \ll & \,\, \sum_{d\leqslant D}\frac{\tau(d)}{d}(q,d^2)^{1/2}q^{-1/2}|w_2(\lambda)|
                         \nonumber \\
    \ll & \,\,\tau_3(q)q^{-1/2}|w_2(\lambda)|\log^2n \ll \frac{\tau_3(q)X_2\log^2n}{q^{1/2}(1+|\lambda|n)}.
\end{align}
 Therefore, for $\alpha\in\mathcal{M}_1(q,a)$, we get
\begin{equation*}
 \mathcal{W}(\alpha)\ll n^{\frac{7}{36}+\frac{5}{12k}}\log^2n,
\end{equation*}
which combines (\ref{mean-jun}) to derive that
\begin{align}\label{K_1-upper-M_1}
      & \,\, \sum_{1\leqslant q\leqslant Q_0}
      \sum_{\substack{a=-q\\(a,q)=1}}^{2q}\int_{\mathfrak{m}_1\cap\mathcal{M}_1(q,a)}|\mathcal{K}_1(\alpha)|\mathrm{d}\alpha
                   \nonumber \\
      \ll & \,\, n^{\frac{7}{36}+\frac{5}{12k}}\log^2n
                 \times\int_0^1\big|f_2(\alpha)f_3^2(\alpha)f_3^*(\alpha)f_k^*(\alpha)\big|\mathrm{d}\alpha
              \ll n^{\frac{11}{12}+\frac{5}{6k}+\varepsilon}.
\end{align}
For $\alpha\in\mathcal{M}_0(q,a)$, it follows from Lemma 4.8 of Titchmarsh \cite{Titchmarsh-book} that
\begin{equation*}
   f_3(\alpha)=\Delta_3(\alpha)+\mathcal{V}_3(\alpha)+O(1).
\end{equation*}
Hence, one obtain
\begin{align}\label{I-123}
         & \,\, \sum_{1\leqslant q\leqslant Q_0}
      \sum_{\substack{a=-q\\(a,q)=1}}^{2q}\int_{\mathfrak{m}_1\cap\mathcal{M}_0(q,a)}|\mathcal{K}_1(\alpha)|\mathrm{d}\alpha
                          \nonumber \\
      \ll & \,\, \sum_{1\leqslant q\leqslant Q_0}
                \sum_{\substack{a=-q\\(a,q)=1}}^{2q}\int_{\mathfrak{m}_1\cap\mathcal{M}_0(q,a)}
                 \big|\mathcal{W}(\alpha)\Delta_3(\alpha)f_2(\alpha)f_3(\alpha)f_3^*(\alpha)f_k^*(\alpha)\big|\mathrm{d}\alpha
                           \nonumber \\
         & \,\,+\sum_{1\leqslant q\leqslant Q_0}\sum_{\substack{a=-q\\(a,q)=1}}^{2q}\int_{\mathfrak{m}_1\cap\mathcal{M}_0(q,a)}
            \big|\mathcal{W}(\alpha)\mathcal{V}_3(\alpha)f_2(\alpha)f_3(\alpha)f_3^*(\alpha)f_k^*(\alpha)\big|\mathrm{d}\alpha
                        \nonumber \\
         & \,\, + \sum_{1\leqslant q\leqslant Q_0}
                  \sum_{\substack{a=-q\\(a,q)=1}}^{2q}\int_{\mathfrak{m}_1\cap\mathcal{M}_0(q,a)}
                  \big|\mathcal{W}(\alpha)f_2(\alpha)f_3(\alpha)f_3^*(\alpha)f_k^*(\alpha)\big|\mathrm{d}\alpha
                          \nonumber \\
     =: & \,\,\,\, \mathfrak{I}_1+\mathfrak{I}_2+\mathfrak{I}_3,
\end{align}
where $\Delta_3(\alpha)$ and $\mathcal{V}_3(\alpha)$ are defined by (\ref{Delta_k-def}) and (\ref{V_k-def}), respectively.

It follows from Cauchy's inequality,  Lemma \ref{mean-value} and Lemma \ref{W(a)-Delta-k} that
\begin{align}\label{I_1-upper}
   \mathfrak{I}_1  \ll &  \,\,\Bigg(\sum_{1\leqslant q\leqslant Q_0}\sum_{\substack{a=-q\\(a,q)=1}}^{2q}\int_{\mathcal{M}(q,a)}
                \big|\mathcal{W}(\alpha)\Delta_3(\alpha)\big|^2\mathrm{d}\alpha\Bigg)^{\frac{1}{2}}
                \Bigg(\int_0^1\big|f_2(\alpha)f_3(\alpha)f_3^*(\alpha)f_k^*(\alpha)\big|^2\mathrm{d}\alpha\Bigg)^{\frac{1}{2}}
                    \nonumber   \\
        \ll &  \,\, \big(n^{\frac{2}{3}}\log^{-100A}n\big)^{\frac{1}{2}}\big(n^{\frac{11}{9}+\frac{5}{3k}}\log^{c+8}n\big)^{\frac{1}{2}}
                    \ll n^{\frac{17}{18}+\frac{5}{6k}}\log^{-40A}n.
\end{align}
By (\ref{W(a)-upper}), it is easy to see that, for $\alpha\in\mathfrak{m}_1$, there holds
\begin{equation}\label{W(a)-upper-m_1}
  \sup_{\alpha\in\mathfrak{m}_1} |\mathcal{W}(\alpha)|\ll n^{\frac{1}{2}}\log^{-30A}n.
\end{equation}
Therefore, by Lemma \ref{mean-value}, Lemma \ref{two-mean}, (\ref{W(a)-upper-m_1}) and Cauchy's inequality, we derive that
\begin{align}\label{I_2-upper}
   \mathfrak{I}_2  \ll &  \,\,\sup_{\alpha\in\mathfrak{m}_1} |\mathcal{W}(\alpha)| \cdot \Bigg(\sum_{1\leqslant q\leqslant Q_0}
                           \sum_{\substack{a=-q\\(a,q)=1}}^{2q}\int_{\mathcal{M}(q,a)}
                           \big|\mathcal{V}_3(\alpha)\big|^2\mathrm{d}\alpha\Bigg)^{\frac{1}{2}}
                                      \nonumber   \\
& \qquad \times\Bigg(\int_0^1\big|f_2(\alpha)f_3(\alpha)f_3^*(\alpha)f_k^*(\alpha)\big|^2\mathrm{d}\alpha\Bigg)^{\frac{1}{2}}
                                     \nonumber   \\
   \ll &  \,\, \big(n^{\frac{1}{2}}\log^{-30A}n\big) \cdot \big(n^{-\frac{1}{3}}\log^{21A}n\big)^{\frac{1}{2}} \cdot
                             \big(n^{\frac{11}{9}+\frac{5}{3k}}\log^{c+8}n\big)^{\frac{1}{2}}
                      \nonumber   \\
        \ll &   \,\, n^{\frac{17}{18}+\frac{5}{6k}}\log^{-5A}n.
\end{align}
It follows from Lemma \ref{mean-value} and Lemma \ref{two-mean} that
\begin{align}\label{I_3-upper}
   \mathfrak{I}_3 \ll & \,\,  \Bigg(\sum_{1\leqslant q\leqslant Q_0}
      \sum_{\substack{a=-q\\(a,q)=1}}^{2q}\int_{\mathcal{M}(q,a)}
      \big|\mathcal{W}(\alpha)\big|^2\mathrm{d}\alpha\Bigg)^{\frac{1}{2}}
      \Bigg(\int_0^1\big|f_2(\alpha)f_3(\alpha)f_3^*(\alpha)f_k^*(\alpha)\big|^2\mathrm{d}\alpha\Bigg)^{\frac{1}{2}}
                             \nonumber   \\
            \ll & \,\,  (\log^{21A}n)^{\frac{1}{2}}\cdot(n^{\frac{11}{9}+\frac{5}{3k}}\log^{c+8}n)^{\frac{1}{2}}
                         \ll n^{\frac{11}{18}+\frac{5}{6k}+\varepsilon} \ll n^{\frac{17}{18}+\frac{5}{6k}-\varepsilon}.
\end{align}
Combining (\ref{I-123}), (\ref{I_1-upper}), (\ref{I_2-upper}) and (\ref{I_3-upper}), we can deduce that
\begin{equation}\label{K_1-upper-1}
\sum_{1\leqslant q\leqslant Q_0}\sum_{\substack{a=-q\\(a,q)=1}}^{2q}\int_{\mathfrak{m}_1\cap\mathcal{M}_0(q,a)}|\mathcal{K}_1(\alpha)|\mathrm{d}\alpha
\ll n^{\frac{17}{18}+\frac{5}{6k}}\log^{-5A}n.
\end{equation}
From (\ref{K=K_1+E}), (\ref{K_1-upper=1+2}), (\ref{K_1-upper-M_1}) and (\ref{K_1-upper-1}), we deduce that
\begin{equation}\label{K(a)-upper-m1}
  \int_{\mathfrak{m}_1}\mathcal{K}(\alpha)\mathrm{d}\alpha\ll  n^{\frac{17}{18}+\frac{5}{6k}}\log^{-5A}n.
\end{equation}
Similarly, we obtain
\begin{equation}\label{K(a)-upper-m0}
  \int_{\mathfrak{m}_0}\mathcal{K}(\alpha)\mathrm{d}\alpha\ll  n^{\frac{17}{18}+\frac{5}{6k}}\log^{-5A}n.
\end{equation}
For $\alpha\in\mathscr{M}_0$, define
\begin{equation*}
  \mathcal{K}_0(\alpha)=\mathcal{W}(\alpha)\mathcal{V}_2(\alpha)\mathcal{V}_3^2(\alpha)
                        \mathcal{V}_3^*(\alpha)\mathcal{V}_k^*(\alpha)e(-n\alpha).
\end{equation*}
By noticing that (\ref{h=W+E}) still holds for $\alpha\in\mathscr{M}_0$, it follows from Lemma \ref{substi-lemma} and (\ref{h=W+E}) that
\begin{equation*}
  \mathcal{K}(\alpha)-\mathcal{K}_0(\alpha)\ll n^{\frac{35}{18}+\frac{5}{6k}}\exp\big(-\log^{1/4}n\big),
\end{equation*}
which implies that
\begin{equation}\label{K(a)=K_0(a)+error}
   \int_{\mathscr{M}_0}\mathcal{K}(\alpha)\mathrm{d}\alpha = \int_{\mathscr{M}_0}\mathcal{K}_0(\alpha)\mathrm{d}\alpha
    +O\big( n^{\frac{17}{18}+\frac{5}{6k}}\log^{-A}n\big).
\end{equation}
By the well--known standard technique  in the Hardy--Littlewood method, we deduce that
\begin{equation}\label{K_0=junzhi}
    \int_{\mathscr{M}_0}K_0(\alpha)\mathrm{d}\alpha =
    \sum_{m\leqslant D^{2/3}}a(m)\sum_{\ell\leqslant D^{1/3}}b(\ell)\frac{\mathfrak{S}_{m\ell}(n)}{m\ell}\mathcal{J}(n)
    +O\big( n^{\frac{17}{18}+\frac{5}{6k}}\log^{-A}n\big),
\end{equation}
and
\begin{equation}\label{J-singular-order}
    \mathcal{J}(n)\asymp n^{\frac{17}{18}+\frac{5}{6k}}.
\end{equation}
Finally, Proposition \ref{mean-value-1} follows from  (\ref{mean-fenjie}), (\ref{mean-m2}) and (\ref{K(a)-upper-m1})--(\ref{J-singular-order}). This completes the proof of Proposition \ref{mean-value-1}.
\end{proof}

By the same method, we have the following Proposition.
\begin{proposition}\label{mean-value-2}
For $3\leqslant k\leqslant14$, define
\begin{equation*}
  J_r(n,d)=\sum_{\substack{(\ell p)^2+m^2+p_2^3+p_3^3+p_4^3+p_5^k=n \\ X_2<\ell p\leqslant2X_2, \,\, \ell\in\mathcal{N}_r,  \,\,
   m\equiv0 \!\!\!\! \pmod d\\ X_3<p_2,\,p_3\leqslant2X_3,\,\, X_3^*<p_4\leqslant2X_3^* \\ X_k^*<p_5\leqslant2X_k^*}}
  \left(\frac{\log p}{\log \displaystyle\frac{X_2}{\ell}}\prod_{j=2}^5\log p_j\right).
\end{equation*}
Then we have
\begin{equation*}
  \sum_{m\leqslant D^{2/3}}a(m)\sum_{t\leqslant D^{1/3}}b(t)\bigg(J_r(n,mt)-\frac{c_r(k)\mathfrak{S}_{mt}(n)}{mt\log X_2}
  \mathcal{J}(n)\bigg)\ll n^{\frac{17}{18}+\frac{5}{6k}}\log^{-A}n,
\end{equation*}
where $c_r(k)$ is defined by (\ref{c_r-def}).
\end{proposition}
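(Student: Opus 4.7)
The strategy is to mirror the proof of Proposition \ref{mean-value-1} verbatim, with the prime--variable generating function $f_2(\alpha)$ replaced everywhere by $g_r(\alpha)$. Define
\begin{equation*}
  \mathcal{K}_r(\alpha) = h(\alpha) g_r(\alpha) f_3^2(\alpha) f_3^*(\alpha) f_k^*(\alpha) e(-n\alpha),
\end{equation*}
so that by orthogonality
\begin{equation*}
  \sum_{m\leqslant D^{2/3}} a(m) \sum_{t\leqslant D^{1/3}} b(t) J_r(n,mt) = \int_{\mathcal{I}_0} \mathcal{K}_r(\alpha)\, \mathrm{d}\alpha,
\end{equation*}
and decompose via the Farey dissection (\ref{Farey-dissection}) into the four pieces over $\mathscr{M}_0$, $\mathfrak{m}_0$, $\mathfrak{m}_1$, $\mathfrak{m}_2$.

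For the minor arcs $\mathfrak{m}_2$: since $g_r(\alpha)$ is a sum of $O(X_2/\log X_2)$ exponentials $e(\alpha(\ell p)^2)$ with coefficients $\ll 1$, an estimate entirely analogous to (\ref{mean-jun}) holds with $g_r$ in place of $f_2$, because Hua's inequality and the $\int_0^1 |g_r(\alpha)|^4\,\mathrm{d}\alpha \ll X_2^{2+\varepsilon}$ bound follow by bounding the number of solutions of the underlying biquadratic equation. Combined with Lemma \ref{h(a)-upp} this produces a saving of $n^{-23\varepsilon}$ against $n^{17/18+5/(6k)}$, just as in (\ref{mean-m2}).

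For $\mathfrak{m}_0$ and $\mathfrak{m}_1$: one substitutes $h(\alpha) = \mathcal{W}(\alpha) + O(n^{17/72+5/(24k)-25\varepsilon})$ as in (\ref{h=W+E}), writes $f_3(\alpha) = \Delta_3(\alpha) + \mathcal{V}_3(\alpha) + O(1)$ on $\mathcal{M}_0(q,a)$, and uses Cauchy's inequality together with Lemma \ref{W(a)-Delta-k}, Lemma \ref{two-mean}, and the analogues of Lemma \ref{mean-value}(i)--(ii) in which $f_2$ is replaced by $g_r$ (these again follow from the same Diophantine counting argument, since $|g_r| \ll 1$ in modulus per term). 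The trio $\mathfrak{I}_1, \mathfrak{I}_2, \mathfrak{I}_3$ is bounded identically to (\ref{I_1-upper})--(\ref{I_3-upper}), yielding the error term $n^{17/18+5/(6k)}\log^{-5A}n$.

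For the major arc $\mathscr{M}_0$: replace $g_r(\alpha)$ by $c_r(k)\mathcal{V}_2(\alpha)/\log X_2$ using Lemma \ref{substi-lemma}(iii), replace each $f_j(\alpha)$, $f_j^*(\alpha)$ by $\mathcal{V}_j(\alpha)$, $\mathcal{V}_j^*(\alpha)$ via Lemma \ref{substi-lemma}(i)--(ii), and $h(\alpha)$ by $\mathcal{W}(\alpha)$ via (\ref{h=W+E}). Because these substitutions each incur at most a factor $\exp(-\log^{1/3}n)$ while the trivial bound on the integrand is $O(n^{35/18+5/(6k)})$, the aggregate error is absorbed. The resulting integral is exactly $c_r(k)/\log X_2$ times the corresponding Hardy--Littlewood integral that appears in Proposition \ref{mean-value-1}, and standard manipulations reproduce
\begin{equation*}
  \int_{\mathscr{M}_0} \mathcal{K}_r(\alpha)\,\mathrm{d}\alpha = \sum_{m \leqslant D^{2/3}} a(m) \sum_{t \leqslant D^{1/3}} b(t)\frac{c_r(k)\mathfrak{S}_{mt}(n)}{mt \log X_2}\mathcal{J}(n) + O\bigl(n^{17/18+5/(6k)}\log^{-A}n\bigr).
\end{equation*}
The main obstacle is verifying the Lemma \ref{mean-value}--type $L^2$ bound with $f_2$ replaced by $g_r$; once that is in hand every other step is a direct transcription. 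Since $g_r$ differs from a prime generating function only by a bounded weight and a restricted support on near--primes, this $L^2$ bound is immediate from Cauchy--Schwarz against the appropriate divisor function estimate, so no genuinely new analytic input is required.
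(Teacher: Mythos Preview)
Your proposal is correct and matches the paper's approach exactly: the paper simply states that Proposition~\ref{mean-value-2} follows ``by the same method'' as Proposition~\ref{mean-value-1}, and your write-up supplies precisely those details --- replacing $f_2$ by $g_r$ throughout, invoking Lemma~\ref{substi-lemma}(iii) on the major arcs, and noting that the requisite mean-value bounds (the analogues of (\ref{mean-jun}) and Lemma~\ref{mean-value}(ii) with $g_r$ in place of $f_2$) follow from the underlying Diophantine counts since $g_r$ has bounded coefficients.
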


\section{On the function $\omega(d)$}

In this section, we shall investigate the function $\omega(d)$ which is defined in (\ref{omega(d)-def}) and required in the proof of  Theorem \ref{theorem-mixed}.
\begin{lemma}\label{congruence-lemma}
  For $3\leqslant k\leqslant14$, let $\mathscr{K}(q,n)$ and $\mathscr{L}(q,n)$ denote the number of solutions of the congruences
\begin{equation*}
   x^2+u_1^3+u_2^3+u_3^3+u_4^k\equiv n \!\!\!\!\pmod q, \quad 1\leqslant x,u_j\leqslant q,\quad (xu_j,q)=1 ,
\end{equation*}
 and
\begin{equation*}
     x_1^2+x_2^2+u_1^3+u_2^3+u_3^3+u_4^k\equiv n \!\!\!\!\pmod q,\quad 1\leqslant x_i,u_j\leqslant q, \quad (x_2u_j,q)=1 ,
\end{equation*}
 respectively. Then, for all $n\equiv0 \pmod 2$, we have $\mathscr{L}(p,n)>\mathscr{K}(p,n)$ for all primes. Moreover, there holds
\begin{equation*}
    \mathscr{L}(p,n)=p^5+O(p^4),
\end{equation*}
\begin{equation*}
    \mathscr{K}(p,n)=p^4+O(p^3).
\end{equation*}
\end{lemma}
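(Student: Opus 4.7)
The plan is to apply orthogonality of additive characters modulo $p$ to convert each count into a Gauss--sum average. Writing
\begin{equation*}
\mathscr{K}(p,n)=\frac{1}{p}\sum_{a=0}^{p-1}S_2^{*}(p,a)\bigl(S_3^{*}(p,a)\bigr)^{3}S_k^{*}(p,a)\,e\!\left(-\frac{an}{p}\right),
\end{equation*}
and noting that $\mathscr{L}(p,n)$ is given by the same expression multiplied inside the sum by the unrestricted quadratic sum $S_2(p,a)$ (corresponding to the variable $x_1$, which is not required to be coprime to $p$), I would isolate the $a=0$ term. From $S_2(p,0)=p$ and $S_j^{*}(p,0)=p-1$, it contributes the main terms $(p-1)^{5}/p = p^{4}+O(p^{3})$ in $\mathscr{K}$ and $(p-1)^{5}=p^{5}+O(p^{4})$ in $\mathscr{L}$.

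For the tails $a\neq 0$, Lemma \ref{Hua-compo}(iii)--(iv) bounds each of $S_2(p,a)$, $S_2^{*}(p,a)$, $S_3^{*}(p,a)$, $S_k^{*}(p,a)$ by $O(\sqrt{p})$ with an absolute implicit constant. Consequently the five--factor product in $\mathscr{K}$ is $O(p^{5/2})$ and the six--factor product in $\mathscr{L}$ is $O(p^{3})$; summing over the $p-1$ nonzero residues and dividing by $p$ gives errors $O(p^{5/2})$ and $O(p^{3})$, both absorbed in the stated $O(p^{3})$ and $O(p^{4})$ remainders. This establishes the two asymptotic identities.

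For the strict inequality, the key observation is the identity
\begin{equation*}
\mathscr{L}(p,n)=\sum_{y=1}^{p}\mathscr{K}(p,n-y^{2}),
\end{equation*}
obtained by conditioning on the residue class of $x_1$: for $y\equiv 0\pmod p$ the summand equals $\mathscr{K}(p,n)$, while every other summand is non--negative, so $\mathscr{L}(p,n)\geqslant\mathscr{K}(p,n)$ automatically. To promote this to strict inequality, I would produce, for each prime $p$ and each even $n$, some $y\in\{1,\ldots,p-1\}$ with $\mathscr{K}(p,n-y^{2})>0$. For all sufficiently large $p$ the asymptotic just proved forces $\mathscr{K}(p,m)>0$ uniformly in $m$, so any nonzero $y$ works; at $p=2$ the choice $y=1$ succeeds because $n-1$ is odd and the unique admissible tuple $(1,1,1,1,1)$ gives $\mathscr{K}(2,n-1)=1$. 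This is exactly where the hypothesis $n\equiv 0\pmod{2}$ enters: for odd $n$ one finds $\mathscr{L}(2,n)=\mathscr{K}(2,n)=1$, so strictness fails without it.

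The main obstacle is the small--prime case of the strict inequality. For $p\in\{3,5,7\}$ the error bound $O(p^{5/2})$ swamps the main term $(p-1)^{5}/p$, so positivity of $\mathscr{K}(p,m)$ for every residue $m$ does not follow from the asymptotic. I would dispatch these finitely many primes by direct enumeration over the $(p-1)^{5}$ admissible tuples, or equivalently by a Cauchy--Davenport--type sumset argument showing that $\{u_1^{3}+u_2^{3}+u_3^{3}:(u_i,p)=1\}$ already covers $\mathbb{Z}/p\mathbb{Z}$ for such small $p$, after which the squares and $k$-th powers of coprime residues supply the remaining flexibility to hit every residue $m$.
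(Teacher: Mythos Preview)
Your derivation of the asymptotics is exactly the paper's: isolate the $a=0$ term and bound the rest via Lemma~\ref{Hua-compo}. Your identity $\mathscr{L}(p,n)=\sum_{y=1}^{p}\mathscr{K}(p,n-y^{2})$ is likewise the paper's decomposition $\mathscr{L}=\mathscr{L}^{*}+\mathscr{K}$, where $\mathscr{L}^{*}(p,n)$ is the count with all six variables coprime to $p$; your sum over $y\in\{1,\dots,p-1\}$ is precisely $\mathscr{L}^{*}(p,n)$.

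The divergence is in how you establish $\mathscr{L}^{*}(p,n)>0$. The paper applies orthogonality directly to $\mathscr{L}^{*}$: main term $(p-1)^{6}/p$ against an error bounded by $\frac{p-1}{p}(\sqrt{p}+1)^{2}(2\sqrt{p}+1)^{3}(13\sqrt{p}+1)$. The extra coprime square contributes a factor $p-1$ to the main term but only a factor $\sqrt{p}+1$ to the error, so the main-to-error ratio is of order $p^{2}$ and positivity follows analytically for $p\geqslant 19$, leaving only $p\in\{2,3,5,7,11,13,17\}$ for direct verification. Your route, forcing $\mathscr{K}(p,m)>0$ for every residue $m$, has main-to-error ratio only of order $p^{3/2}$; with the explicit constants from Lemma~\ref{Hua-compo}(iv) one needs roughly $(p-1)^{4}>(\sqrt{p}+1)(2\sqrt{p}+1)^{3}(13\sqrt{p}+1)$, which first holds somewhere past $p=30$, not at $p=7$. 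So your plan is sound, but the list of small primes to be checked by hand is appreciably longer than $\{3,5,7\}$. Summing your $\mathscr{K}(p,n-y^{2})$ over all nonzero $y$ before estimating---which is exactly the paper's move---recovers the missing factor of $\sqrt{p}$ and is the more economical way to close the argument.
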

\begin{proof}
   Let $\mathscr{L}^*(q,n)$ denote the number of solutions of the congruence
\begin{equation*}
    x_1^2+x_2^2+u_1^3+u_2^3+u_3^3+u_4^k\equiv n \!\!\!\!\pmod q,\quad 1\leqslant x_i,u_j\leqslant q, \quad (x_1x_2u_j,q)=1.
\end{equation*}
Then by the orthogonality of Dirichlet characters, we have
\begin{align} \label{L*(p,N)=p-1+E_p}
    p\cdot\mathscr{L}^*(p,n)  = & \,\,\sum_{a=1}^p S_2^{*2}(p,a)S_3^{*3}(p,a)S_k^{*}(p,a)e\Big(-\frac{an}{p}\Big)
                        \nonumber \\
     = & \,\, (p-1)^6+E_p,
\end{align}
where
\begin{equation*}
    E_p=\sum_{a=1}^{p-1} S_2^{*2}(p,a)S_3^{*3}(p,a)S_k^{*}(p,a)e\Big(-\frac{an}{p}\Big).
\end{equation*}
By (iv) of Lemma \ref{Hua-compo}, we have
\begin{equation}\label{E_p-upper}
   |E_p|\leqslant (p-1)(\sqrt{p}+1)^2(2\sqrt{p}+1)^3(13\sqrt{p}+1).
\end{equation}
It is easy to check that $|E_p|<(p-1)^6$ for $p\geqslant19$. Hence we get $\mathscr{L}^*(p,n)>0$ for $p\geqslant19$. On the other hand, for $p=2,3,5,7,11,13,17$, we can check $\mathscr{L}^*(p,n)>0$ directly by hand. Therefore, we obtain $\mathscr{L}^*(p,n)>0$ for all primes and
\begin{equation}\label{L(p,N)=L^*+K(p,N)}
  \mathscr{L}(p,n)=\mathscr{L}^*(p,n)+\mathscr{K}(p,n)>\mathscr{K}(p,n).
\end{equation}
 From (\ref{L*(p,N)=p-1+E_p}) and (\ref{E_p-upper}), we derive that
\begin{equation}\label{L^*(p,N)-asymp}
   \mathscr{L}^*(p,n)=p^5+O(p^4).
\end{equation}
By a similar argument of (\ref{L*(p,N)=p-1+E_p}) and (\ref{E_p-upper}), we have
\begin{equation}\label{K(p,N)-asymp}
   \mathscr{K}(p,n)=p^4+O(p^3).
\end{equation}
Combining (\ref{L(p,N)=L^*+K(p,N)})--(\ref{K(p,N)-asymp}), we obtain the desired results.
\end{proof}

\begin{lemma}\label{S(N)-convergence}
   The series $\mathfrak{S}(n)$ is convergent and satisfying $\mathfrak{S}(n)>0$.
\end{lemma}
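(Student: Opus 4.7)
The plan is to pass to an Euler product expansion and handle convergence and positivity of each local factor separately. First I would establish that $A(q,n)$ is a multiplicative function of $q$. This follows from a standard Chinese Remainder Theorem argument: for coprime $q_1,q_2$, the substitution $a\equiv a_1\overline{q_2}q_2+a_2\overline{q_1}q_1\pmod{q_1q_2}$ runs over reduced residues, and the exponential sums $S_2(q,a)$, $S_j^*(q,a)$ split as products $S_2(q_1,a_1 q_2^{?})S_2(q_2,a_2 q_1^{?})$ (with analogous factorisations for the twisted sums). The factor $q\varphi(q)^5$ is plainly multiplicative, and the twist $e(-an/q)$ splits compatibly, so $A(q_1 q_2,n)=A(q_1,n)A(q_2,n)$. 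Consequently,
\begin{equation*}
  \mathfrak{S}(n)=\prod_p\chi_p(n),\qquad \chi_p(n):=1+\sum_{h\geqslant1}A(p^h,n),
\end{equation*}
provided both sides are defined.

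Next I would prove absolute convergence. For $h=1$ and $(a,p)=1$, Lemma~\ref{Hua-compo}(iii)--(iv) give $|S_2(p,a)|\ll p^{1/2}$, $|S_2^*(p,a)|\ll p^{1/2}$, $|S_3^*(p,a)|\ll p^{1/2}$ and $|S_k^*(p,a)|\ll p^{1/2}$. Hence
\begin{equation*}
  |B(p,n)|\ll (p-1)\cdot p^{1/2}\cdot p^{1/2}\cdot p^{3/2}\cdot p^{1/2}\ll p^{9/2},
\end{equation*}
which, dividing by $p\varphi(p)^5\asymp p^6$, yields $|A(p,n)|\ll p^{-3/2}$. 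For $h\geqslant 2$, I would use the general bound (ii) of Lemma~\ref{Hua-compo} (or the vanishing statement (v), which makes the sum over $h$ finite at each prime) together with $|S_2(p^h,a)|\leqslant p^{h/2}$ to obtain $|A(p^h,n)|\ll p^{-3h/2+\varepsilon}$. Summing geometrically, each Euler factor satisfies $\chi_p(n)=1+O(p^{-3/2})$, and $\sum_p p^{-3/2}<\infty$ then guarantees that the product defining $\mathfrak{S}(n)$ converges absolutely.

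For positivity I would identify $\chi_p(n)$ with a $p$-adic density. By orthogonality of additive characters modulo $p^H$ and Mobius inversion on the coprimality conditions in the defining sums $S_2^*,S_3^*,S_k^*$, one obtains the standard identity
\begin{equation*}
  \sum_{h=0}^{H}A(p^h,n)=\frac{M_H(p,n)}{p^{5H}(1-1/p)^5}
\end{equation*}
(up to the harmless factor from $S_2$ being unrestricted), where $M_H(p,n)$ counts solutions modulo $p^H$ of $x_1^2+x_2^2+u_1^3+u_2^3+u_3^3+u_4^k\equiv n\pmod{p^H}$ with $(x_2u_1u_2u_3u_4,p)=1$ and no restriction on $x_1$. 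Thus $\chi_p(n)\geqslant 0$, and it suffices to show $M_H(p,n)\gg p^{5H}$ uniformly in $H$. Lemma~\ref{congruence-lemma} supplies $\mathscr{L}(p,n)\geqslant \mathscr{K}(p,n)+\mathscr{L}^*(p,n)\geqslant 1$ for every prime $p$. For $p\geqslant 19$ (or whenever the quadratic variables offer an invertible partial derivative) Hensel's lemma lifts any such base solution to solutions modulo $p^H$ for every $H$, giving $M_H(p,n)\gg p^{5H}$. For the finitely many small primes $p\in\{2,3,5,7,11,13,17\}$ I would argue directly, adapting the verification already performed in Lemma~\ref{congruence-lemma}: a nonsingular solution (in the $x_2$-variable, which is invertible modulo $p$) exists and lifts by Hensel's lemma, so again $\chi_p(n)>0$.

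The main obstacle is the bookkeeping at Step three: one must verify the solution-count identity with its correct power of $p$, keeping track of which variables carry a coprimality restriction (the $S_j^*$ versus $S_2$) and hence which $\varphi(p^h)$-factors appear in the denominator, and then confirm that the Hensel-lifting argument applies even at $p=2$ (where $x_2^2$ modulo powers of $2$ requires extra care). Once these local computations are in place, convergence of the product and strict positivity of each Euler factor combine to give $\mathfrak{S}(n)>0$, completing the proof.
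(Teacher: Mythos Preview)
Your outline is essentially correct and would yield a valid proof, but it is considerably more elaborate than what the paper actually does. The key simplification you miss is that the vanishing statement Lemma~\ref{Hua-compo}\,(v) does more than make the sum over $h$ finite: it kills \emph{every} term with $h\geqslant 2$. Indeed, for any prime $p$ at least one of $S_2^*(p^h,a)$ or $S_3^*(p^h,a)$ vanishes once $h\geqslant 2$ (since no prime divides both $2$ and $3$, one of these has $\gamma(p)=2$). Hence the Euler factor is exactly
\[
\chi_p(n)=1+A(p,n),
\]
with no tail.

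This collapses your entire third step. The paper simply observes the elementary identity
\[
1+A(p,n)=\frac{\mathscr{L}(p,n)}{(p-1)^5},
\]
which follows by adding back the term $a=p$ to the sum defining $B(p,n)$ and using orthogonality modulo $p$. Lemma~\ref{congruence-lemma} already gives $\mathscr{L}(p,n)>0$ for every prime $p$ when $n$ is even, so positivity of each Euler factor is immediate---no $p$-adic density identity, no Hensel lifting, and no separate treatment of $p=2$ is required. For the tail, the paper uses (iii)--(iv) of Lemma~\ref{Hua-compo} to get the explicit bound $|A(p,n)|\leqslant 200/p^2$ for $p\geqslant 29$, which suffices for $\prod_{p\geqslant 29}(1+A(p,n))>0$. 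Your route via Hensel's lemma is the standard general machinery for singular series positivity and would work, but here the truncation of the Euler factor makes it unnecessary.
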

\begin{proof}
   From (i) and (ii) of Lemma \ref{Hua-compo}, we obtain
\begin{equation*}
   |A(q,n)|\ll \frac{|B(q,n)|}{q\varphi^5(q)}\ll \frac{q^{2+5\varepsilon}}{\varphi^4(q)} \ll\frac{q^{2+5\varepsilon}(\log\log q)^4}{q^4}\ll\frac{1}{q^{3/2}}.
\end{equation*}
Thus, the series
\begin{equation*}
   \mathfrak{S}(n)=\sum_{q=1}^\infty A(q,n)
\end{equation*}
converges absolutely. Noting the fact that $A(q,n)$ is multiplicative in $q$ and by (v) of Lemma \ref{Hua-compo}, we get
\begin{equation}\label{S(N)-prod}
   \mathfrak{S}(n)=\prod_{p}\big(1+A(p,n)\big).
\end{equation}
From (iii) and (iv) of Lemma \ref{Hua-compo}, we know that, for $p\geqslant29$, we have
\begin{equation*}
   |A(p,n)|\leqslant\frac{(p-1)\sqrt{p}(\sqrt{p}+1)(2\sqrt{p}+1)^3(13\sqrt{p}+1)}{p(p-1)^5}\leqslant\frac{200}{p^2}.
\end{equation*}
Therefore, there holds
\begin{equation}\label{S(N)-wei}
  \prod_{p\geqslant29}\big(1+A(p,n)\big)\geqslant\prod_{p\geqslant29}\bigg(1-\frac{200}{p^2}\bigg)\geqslant c_1>0.
\end{equation}
On the other hand, it is easy to see that
\begin{equation}\label{S(N)-yinzi}
  1+A(p,n)=\frac{\mathscr{L}(p,n)}{(p-1)^5}.
\end{equation}
 By Lemma \ref{congruence-lemma}, we have $\mathscr{L}(p,n)>0$ for all $p$ with $n\equiv0 \pmod 2$, and thus $1+A(p,n)>0$. Consequently, we obtain
\begin{equation}\label{S(N)-head}
  \prod_{p<29}\big(1+A(p,n)\big)\geqslant c_2>0.
\end{equation}
Combining (\ref{S(N)-prod}), (\ref{S(N)-wei}) and (\ref{S(N)-head}), we conclude that $\mathfrak{S}(n)>0$,
which completes the proof of Lemma \ref{S(N)-convergence}.
\end{proof}

In view of Lemma \ref{S(N)-convergence}, we define
\begin{equation}\label{omega(d)-def}
   \omega(d)=\frac{\mathfrak{S}_d(n)}{\mathfrak{S}(n)}.
\end{equation}
Similar to (\ref{S(N)-prod}), we have
\begin{equation}\label{Sd(N)-prod}
   \mathfrak{S}_d(n)=\prod_{p}\big(1+A_d(p,n)\big).
\end{equation}
If $(d,q)=1$, then we have $S_k(q,ad^k)=S_k(q,a)$. Moreover, if $p|d$, then we get $A_d(p,n)=A_p(p,n)$.
Therefore, we derive that
\begin{equation}\label{omega-frac}
   \omega(p)=\frac{1+A_p(p,n)}{1+A(p,n)},\qquad \omega(d)=\prod_{p|d}\omega(p).
\end{equation}
Also, it is easy to show that
\begin{equation}\label{omega-frac-up}
   1+A_p(p,n)=\frac{p}{(p-1)^5}\mathscr{K}(p,n).
\end{equation}
Using (\ref{S(N)-yinzi}), (\ref{omega-frac}) and (\ref{omega-frac-up}), we derive
\begin{equation*}
   \omega(p)=\frac{p\cdot\mathscr{K}(p,n)}{\mathscr{L}(p,n)},
\end{equation*}
from which and Lemma \ref{congruence-lemma}, we derive the following lemma.

\begin{lemma}\label{omega(p)-property}
   The function $\omega(d)$ is multiplicative and satisfies
\begin{equation}\label{Omega-condition}
   0\leqslant\omega(p)<p,\qquad \omega(p)=1+O(p^{-1}).
\end{equation}
\end{lemma}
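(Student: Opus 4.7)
The plan is to read off all three claims directly from the explicit formula $\omega(p)=p\,\mathscr{K}(p,n)/\mathscr{L}(p,n)$ derived just before the statement of the lemma, together with the information on $\mathscr{K}(p,n)$ and $\mathscr{L}(p,n)$ collected in Lemma \ref{congruence-lemma}. No new analytic input is required; the work is purely bookkeeping of inequalities and asymptotics that have already been established.

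First I would record multiplicativity. Since $A_d(q,n)$ arises from a Gauss-sum product with only the $S_2(q,ad^2)$ factor depending on $d$, the factorization $\mathfrak{S}_d(n)=\prod_{p}\bigl(1+A_d(p,n)\bigr)$ in equation (\ref{Sd(N)-prod}) and the reduction $A_d(p,n)=A_p(p,n)$ whenever $p\mid d$ (and $A_d(p,n)=A(p,n)$ otherwise) yield the product formula $\omega(d)=\prod_{p\mid d}\omega(p)$ displayed in (\ref{omega-frac}). Multiplicativity of $\omega$ is then immediate.

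For the bounds $0\leqslant\omega(p)<p$, I would invoke Lemma \ref{congruence-lemma}. Both $\mathscr{K}(p,n)$ and $\mathscr{L}(p,n)$ are solution counts of congruences, hence non-negative; moreover the proof of Lemma \ref{S(N)-convergence} shows $\mathscr{L}(p,n)>0$ for every prime $p$ under the parity hypothesis on $n$, so the quotient $\omega(p)=p\,\mathscr{K}(p,n)/\mathscr{L}(p,n)$ is well defined and non-negative. The strict inequality $\omega(p)<p$ is exactly the statement $\mathscr{L}(p,n)>\mathscr{K}(p,n)$, which is the content of the first assertion of Lemma \ref{congruence-lemma}.

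Finally, for the asymptotic $\omega(p)=1+O(p^{-1})$, I would just insert the two asymptotic relations $\mathscr{K}(p,n)=p^4+O(p^3)$ and $\mathscr{L}(p,n)=p^5+O(p^4)$ supplied by Lemma \ref{congruence-lemma}. This gives
\[
\omega(p)=\frac{p\bigl(p^4+O(p^3)\bigr)}{p^5+O(p^4)}
=\frac{1+O(p^{-1})}{1+O(p^{-1})}=1+O(p^{-1}),
\]
where the last step uses the geometric series expansion of $(1+O(p^{-1}))^{-1}$, which is valid for all sufficiently large $p$; for the finitely many small primes one absorbs the bounded ratio into the implied constant. I do not anticipate any genuine obstacle: the only mildly delicate point is ensuring that the implied constants in the two asymptotics do not conspire to make the denominator vanish, but this is ruled out by the positivity statement $\mathscr{L}(p,n)>0$ already used above.
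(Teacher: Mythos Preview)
Your proposal is correct and follows exactly the approach the paper itself takes: the paper derives the formula $\omega(p)=p\,\mathscr{K}(p,n)/\mathscr{L}(p,n)$ and then simply states that the lemma follows ``from which and Lemma \ref{congruence-lemma}'', with multiplicativity already recorded in (\ref{omega-frac}). Your write-up is in fact more explicit than the paper's one-line justification, but the underlying argument is identical.
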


\section{Proof of Theorem \ref{theorem-mixed}}

In this section, let $f(s)$ and $F(s)$ denote the classical functions in the linear sieve theory. Then it follows from (2.8) and (2.9) of Chapter 8 in \cite{Halberstam-Richert} that
\begin{equation*}
  F(s)=\frac{2e^\gamma}{s},\quad 1\leqslant s\leqslant3;\qquad f(s)=\frac{2e^\gamma\log(s-1)}{s},\quad 2\leqslant s\leqslant4.
\end{equation*}
In the proof of Theorem \ref{theorem-mixed}, let $\lambda^{\pm}(d)$ be the lower and upper bounds for Rosser's weights of level $D$, hence
for any positive integer $d$ we have
\begin{equation*}
  |\lambda^{\pm}(d)|\leqslant1,\quad \lambda^{\pm}(d)=0 \quad \textrm{if} \quad d>D \quad \textrm{or}\quad \mu(d)=0.
\end{equation*}
For further properties of Rosser's weights we refer to Iwaniec \cite{Iwaniec-1980-1}. Define
\begin{equation*}
  \mathscr{W}(z)=\prod_{2<p<z}\bigg(1-\frac{\omega(p)}{p}\bigg).
\end{equation*}
Then from Lemma \ref{omega(p)-property} and Mertens' prime number theorem (See \cite{Mertens-1874}) we obtain
\begin{equation}\label{V(z)-asymp-order}
    \mathscr{W}(z)\asymp \frac{1}{\log N}.
\end{equation}
In order to prove Theorem \ref{theorem-mixed}, we need the following lemma.
\begin{lemma}
  Under the condition (\ref{Omega-condition}), then if $z\leqslant D$, there holds
\begin{equation}\label{Rosser-lower}
   \sum_{d|\mathscr{P}}\frac{\lambda^-(d)\omega(d)}{d}\geqslant\mathscr{W}(z)\bigg(f\bigg(\frac{\log D}{\log z}\bigg)+O\big(\log^{-1/3}D\big)\bigg),
\end{equation}
and if $z\leqslant D^{1/2}$, there holds
\begin{equation}\label{Rosser-upper}
   \sum_{d|\mathscr{P}}\frac{\lambda^+(d)\omega(d)}{d}\leqslant\mathscr{W}(z)\bigg(F\bigg(\frac{\log D}{\log z}\bigg)+O\big(\log^{-1/3}D\big)\bigg).
\end{equation}
\end{lemma}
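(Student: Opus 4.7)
The strategy is to verify that the arithmetical function $\omega(d)$ meets the density hypothesis of Iwaniec's linear sieve, and then quote the corresponding fundamental inequalities for Rosser's weights from \cite{Iwaniec-1980-1}.

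First, I would check that $\omega$ satisfies the linear sieve condition $\Omega(1,L)$: namely, that $\omega$ is multiplicative, $0\leqslant\omega(p)<p$, and
\begin{equation*}
    \prod_{w\leqslant p<z}\bigg(1-\frac{\omega(p)}{p}\bigg)^{-1}\leqslant\frac{\log z}{\log w}\bigg(1+\frac{L}{\log w}\bigg)
\end{equation*}
for some absolute constant $L>0$ and all $2\leqslant w<z$. Multiplicativity and the bounds $0\leqslant\omega(p)<p$ are exactly the content of Lemma \ref{omega(p)-property}. The upper bound for the product follows from the asymptotic $\omega(p)=1+O(p^{-1})$, also recorded in (\ref{Omega-condition}): taking logarithms and using $\log(1-\omega(p)/p)^{-1}=\omega(p)/p+O(p^{-2})$, one reduces the claim to Mertens' theorem
\begin{equation*}
    \sum_{w\leqslant p<z}\frac{1}{p}=\log\frac{\log z}{\log w}+O\bigg(\frac{1}{\log w}\bigg),
\end{equation*}
together with the convergence of $\sum_p p^{-2}$ to absorb the error terms into the constant $L$.

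Once $\Omega(1,L)$ is verified, the lower bound (\ref{Rosser-lower}) for $z\leqslant D$ and the upper bound (\ref{Rosser-upper}) for $z\leqslant D^{1/2}$ are precisely the fundamental inequalities of Iwaniec's linear sieve (see \cite{Iwaniec-1980-1}), applied with sieving level $D$ and the weights $\lambda^\pm(d)$ defined there. The functions $f(s)$ and $F(s)$ appearing in those theorems coincide with the classical linear sieve functions described at the beginning of this section, and the error term $O(\log^{-1/3}D)$ is the standard remainder in Iwaniec's version of the bounds.

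I expect the main (though largely bookkeeping) obstacle to be the verification of the density hypothesis, since it requires combining the pointwise estimate $\omega(p)=1+O(p^{-1})$ with Mertens' theorem carefully enough to produce a uniform constant $L$, independent of $n$ and of the sieving window $[w,z)$. After that, the conclusion is a direct citation of Iwaniec's results, so no further analytic input is needed beyond what has already been established for $\omega(d)$.
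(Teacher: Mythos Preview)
Your approach is essentially the same as the paper's: reduce the statement to Iwaniec's fundamental lemma for the linear sieve and cite it. The paper's own proof is in fact a one--line reference, namely ``See Iwaniec \cite{Iwaniec-1980-2}, (12) and (13) of Lemma~3,'' so your verification of the density hypothesis $\Omega(1,L)$ from (\ref{Omega-condition}) via Mertens' theorem is more detailed than what the paper supplies, but entirely in the same spirit. The only minor discrepancy is the citation: the precise formulation with error term $O(\log^{-1/3}D)$ comes from \cite{Iwaniec-1980-2} rather than \cite{Iwaniec-1980-1}, so you should point to that reference instead.
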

\begin{proof}
   See Iwaniec \cite{Iwaniec-1980-2}, (12) and (13) of Lemma 3.
\end{proof}

From the definition of $\mathcal{B}_r$, we know that $r\leqslant\big[\frac{36k}{15-k}\big]$. Hence we obtain
\begin{align}\label{R_k(N)-lower}
                    \mathscr{R}_k(N)
      \geqslant &  \,\,\sum_{\substack{m^2+p_1^2+p_2^3+p_3^3+p_4^3+p_5^k=n\\ X_2<m,p_1\leqslant2X_2,  \,\,
                   (m,\mathscr{P})=1\\ X_3<p_2,p_3\leqslant2X_3 \\ X_3^*<p_4\leqslant2X_3^* \\ X_k^*<p_5\leqslant2X_k^*}}1-
                    \sum_{r=r(k)+1}^{\big[\frac{36k}{15-k}\big]} \sum_{\substack{m^2+p_1^2+p_2^3+p_3^3+p_4^3+p_5^k=n\\ m\in\mathcal{B}_r,\,\,\,\, X_2<p_1\leqslant2X_2 \\
                    X_3<p_2,p_3\leqslant2X_3\\ X_3^*<p_4\leqslant2X_3^* \\ X_k^*<p_5\leqslant2X_k^* }}1
                           \nonumber \\
      =: & \,\,\,\Upsilon_0-\sum_{r=r(k)+1}^{\big[\frac{36k}{15-k}\big]}\Upsilon_{r}.
\end{align}
By the property (\ref{Rosser-lower}) of Rosser's weight $\lambda^-(d)$ and Proposition \ref{mean-value-1}, we get
\begin{align}\label{Upsilon_0-lower}
                 \Upsilon_0   \geqslant
  & \,\, \frac{1}{\log\bf{\Xi}}\sum_{\substack{m^2+p_1^2+p_2^3+p_3^3+p_4^3+p_5^k=n\\ X_2<m,p_1\leqslant2X_2,  \,\,
                   (m,\mathscr{P})=1\\ X_3<p_2,p_3\leqslant2X_3 \\ X_3^*<p_4\leqslant2X_3^* \\ X_k^*<p_5\leqslant2X_k^*}}  \prod_{j=1}^5\log p_j
                               \nonumber  \\
   = & \,\,       \frac{1}{\log\bf{\Xi}}\sum_{\substack{m^2+p_1^2+p_2^3+p_3^3+p_4^3+p_5^k=n\\
                  X_2<m,p_1\leqslant2X_2,  \,\, X_3^*<p_4\leqslant2X_3^* \\ X_3<p_2,p_3\leqslant2X_3, \,\,  X_k^*<p_5\leqslant2X_k^* }}
                  \Bigg(\prod_{j=1}^5\log p_j\Bigg) \sum_{d|(m,\mathscr{P})}\mu(d)
                                 \nonumber  \\
   \geqslant & \,\, \frac{1}{\log\bf{\Xi}}\sum_{\substack{m^2+p_1^2+p_2^3+p_3^3+p_4^3+p_5^k=n\\
                     X_2<m,p_1\leqslant2X_2,  \,\, X_3^*<p_4\leqslant2X_3^* \\ X_3<p_2,p_3\leqslant2X_3, \,\,  X_k^*<p_5\leqslant2X_k^*  }}
                    \Bigg(\prod_{j=1}^5\log p_j\Bigg) \sum_{d|(m,\mathscr{P})}\lambda^-(d)
                                  \nonumber  \\
          = & \,\,  \frac{1}{\log\bf{\Xi}} \sum_{d|\mathscr{P}}\lambda^-(d)J(n,d)
                                 \nonumber  \\
   = & \,\, \frac{1}{\log\bf{\Xi}} \sum_{d|\mathscr{P}}\frac{\lambda^-(d)\mathfrak{S}_d(n)}{d} \mathcal{J}(n)+ O\big(n^{\frac{17}{18}+\frac{5}{6k}}\log^{-A}n\big)
                                  \nonumber  \\
   = & \,\, \frac{1}{\log\bf{\Xi}} \Bigg(\sum_{d|\mathscr{P}}\frac{\lambda^-(d)\omega(d)}{d}\Bigg)\mathfrak{S}(n)\mathcal{J}(n)
             + O\big(n^{\frac{17}{18}+\frac{5}{6k}}\log^{-A}n\big)
                                  \nonumber  \\
   \geqslant & \,\, \frac{\mathfrak{S}(n)\mathcal{J}(n)\mathscr{W}(z)}{\log\bf{\Xi}} f(3)\Big(1+O\big(\log^{-1/3}D\big)\Big)
                     + O\big(n^{\frac{17}{18}+\frac{5}{6k}}\log^{-A}n\big) .
\end{align}

By the property (\ref{Rosser-upper}) of Rosser's weight $\lambda^+(d)$ and Proposition \ref{mean-value-2}, we have
\begin{align}\label{Upsilon_r-upper}
  \Upsilon_r   \leqslant & \,\, \sum_{\substack{(\ell p)^2+m^2+p_2^3+p_3^3+p_4^3+p_5^k=n\\ \ell\in\mathcal{N}_r,\,\,\,
               X_2<\ell p\leqslant2X_2,\,\,(m,\mathscr{P})=1 \\ X_3<p_2,p_3\leqslant2X_3\\ X_3^*<p_4\leqslant2X_3^* \\ X_k^*<p_5\leqslant2X_k^*}}1
                                \nonumber  \\
    \leqslant & \,\, \frac{1}{\log\bf{\Theta}}\sum_{\substack{(\ell p)^2+m^2+p_2^3+p_3^3+p_4^3+p_5^k=n\\
                \ell\in\mathcal{N}_r,\,\,\, X_2<\ell p\leqslant2X_2,\,\,(m,\mathscr{P})=1 \\ X_3<p_2,p_3\leqslant2X_3\\ X_3^*<p_4\leqslant2X_3^* \\ X_k^*<p_5\leqslant2X_k^*}}
                     \frac{\log p}{\log\frac{X_2}{\ell}} \prod_{j=2}^5\log p_j
                                \nonumber  \\
    = & \,\,  \frac{1}{\log\bf{\Theta}} \sum_{\substack{(\ell p)^2+m^2+p_2^3+p_3^3+p_4^3+p_5^k=n\\
                \ell\in\mathcal{N}_r,\,\,\, X_2<\ell p\leqslant2X_2 \\ X_3<p_2,p_3\leqslant2X_3\\ X_3^*<p_4\leqslant2X_3^* \\ X_k^*<p_5\leqslant2X_k^*}}
                      \Bigg(\frac{\log p}{\log\frac{X_2}{\ell}} \prod_{j=2}^5\log p_j\Bigg)\sum_{d|(m,\mathscr{P})}\mu(d)
                                \nonumber  \\
    \leqslant & \,\,  \frac{1}{\log\bf{\Theta}} \sum_{\substack{(\ell p)^2+m^2+p_2^3+p_3^3+p_4^3+p_5^k=n\\
                \ell\in\mathcal{N}_r,\,\,\, X_2<\ell p\leqslant2X_2 \\ X_3<p_2,p_3\leqslant2X_3\\ X_3^*<p_4\leqslant2X_3^* \\ X_k^*<p_5\leqslant2X_k^*}}
                      \Bigg(\frac{\log p}{\log\frac{X_2}{\ell}} \prod_{j=2}^5\log p_j\Bigg)\sum_{d|(m,\mathscr{P})}\lambda^+(d)
                                \nonumber  \\
            = & \,\, \frac{1}{\log\bf{\Theta}} \sum_{d|\mathscr{P}} \lambda^+(d)J_r(n,d)
                                \nonumber  \\
            = & \,\, \frac{1}{\log\bf{\Theta}} \sum_{d|\mathscr{P}}
                     \frac{\lambda^+(d)c_r(k)\mathfrak{S}_d(n)}{d\log X_2}\mathcal{J}(n)
                      +O\big(n^{\frac{17}{18}+\frac{5}{6k}}\log^{-A}n\big)
                                \nonumber  \\
            = & \,\, \frac{c_r(k)\mathfrak{S}(n)\mathcal{J}(n)}{(\log X_2)\log\bf{\Theta}}
                     \sum_{d|\mathscr{P}}\frac{\lambda^+(d)\omega(d)}{d}+O\big(n^{\frac{17}{18}+\frac{5}{6k}}\log^{-A}n\big)
                                \nonumber  \\
    \leqslant & \,\, \frac{c_r(k)\mathfrak{S}(n)\mathcal{J}(n)\mathscr{W}(z)}{\log\bf{\Xi}} F(3)\Big(1+O\big(\log^{-1/3}D\big)\Big)
                      + O\big(n^{\frac{17}{18}+\frac{5}{6k}}\log^{-A}n\big).
\end{align}
According to simple numerical calculations, we know that
\begin{align*}
   &  c_4(3)\leqslant0.4443636,\,\, c_5(3)\leqslant0.0578256,\,\, c_j(3)\leqslant0.0027627 \,\,\,\, \textrm{with} \,\,\, 6\leqslant j\leqslant9;  \\
   &  c_5(4)\leqslant0.3029445,\,\, c_6(4)\leqslant0.0459743,\,\, c_j(4)\leqslant0.00388094\,\,\,\, \textrm{with} \,\,\,7\leqslant j\leqslant13;  \\
   &  c_6(5)\leqslant0.1892887,\,\, c_j(5)\leqslant0.0307123 \,\,\,\, \textrm{with} \,\,\, 7\leqslant j\leqslant 18;  \\
   &  c_6(6)\leqslant0.4867818,\,\, c_7(6)\leqslant0.1133016,\,\, c_8(6)\leqslant0.01913692,  \\
   &  c_j(6)\leqslant 0.00237244 \,\,\,\, \textrm{with} \,\,\, 9\leqslant j\leqslant 24; \\
   &  c_7(7)\leqslant 0.2978111,\,\, c_8(7)\leqslant0.0672273,\,\, c_j(7)\leqslant0.0117295 \,\,\,\, \textrm{with} \,\,\,
            9\leqslant j\leqslant 31;  \\
   &  c_8(8)\leqslant 0.1830229,\,\, c_9(8)\leqslant0.0407894,\,\,c_j(8)\leqslant0.0073521 \,\,\,\, \textrm{with} \,\,\,
            10\leqslant j\leqslant 41;  \\
   &  c_8(9)\leqslant 0.4323101,\,\, c_9(9)\leqslant0.1169923,\,\, c_{10}(9)\leqslant0.02614497,   \\
   &   c_{11}(9)\leqslant0.0048887,\,\, c_j(9)\leqslant0.000772739 \,\,\,\, \textrm{with} \,\,\, 12\leqslant j\leqslant 54; \\
   & c_9(10)\leqslant0.3023038,\,\,c_{10}(10)\leqslant0.0809431,\,\,c_{11}(10)\leqslant0.0184125,  \\
   & c_j(10)\leqslant0.003597861 \,\,\,\, \textrm{with} \,\,\, 12\leqslant j\leqslant 72;  \\
   & c_{10}(11)\leqslant0.2360241,\,\,c_{11}(11)\leqslant0.0639155,\,\, c_{12}(11)\leqslant0.01504156, \\
   & c_j(11)\leqslant0.003105002 \,\,\,\, \textrm{with} \,\,\, 13\leqslant j\leqslant 99;  \\
   & c_{11}(12)\leqslant0.2231261,\,\,c_{12}(12)\leqslant0.06262236,\,\,c_{13}(12)\leqslant 0.01555779, \\
   & c_{14}(12)\leqslant0.00344782,\,\, c_j(12)\leqslant0.0006868855 \,\,\,\,\textrm{with}\,\,\,15\leqslant j\leqslant 144;  \\
   & c_{12}(13)\leqslant0.2976851,\,\, c_{13}(13)\leqslant0.0895433,\,\, c_{14}(13)\leqslant0.0242215,\\
   & c_{15}(13)\leqslant0.005929363,\,\, c_{16}(13)\leqslant0.0013212887,\,\, \\
   &    c_{j}(13)\leqslant0.0002694412 \,\,\,\, \textrm{with} \,\,\, 17\leqslant j\leqslant 234;   \\
   & c_{14}(14)\leqslant0.2926583,\,\,c_{15}(14)\leqslant0.09172191,\,\,c_{16}(14)\leqslant 0.026363835, \\
   & c_{17}(14)\leqslant0.006978431,\,\, c_{18}(14)\leqslant0.001783123,\,\, \\
   &  c_j(14)\leqslant0.0002510648  \,\,\,\,\textrm{with}\,\,\,19\leqslant j\leqslant 504.
\end{align*}
Therefore, if we write
\begin{equation}\label{C_k-def}
 C(k)=\sum_{r=r(k)+1}^{\big[\frac{36k}{15-k}\big]}c_r(k),
\end{equation}
then we have
\begin{align}
 & \, C(3)<0.513241,\quad C(4)<0.376086,\quad C(5)<0.557837,\quad C(6)<0.657181,  \\
 & \, C(7)<0.634817,\quad C(8)<0.459081,\quad C(9)<0.613564,\quad C(10)<0.621131,  \\
 & \, C(11)<0.585117,\,\, C(12)<0.394051,\,\,\, C(13)<0.477439,\,\,\,\, C(14)<0.541523. \label{num-cal}
\end{align}
From (\ref{V(z)-asymp-order}), (\ref{R_k(N)-lower})--(\ref{num-cal}), we derive that
\begin{align*}
          \mathscr{R}_k(N) \geqslant
           & \,\,   \Bigg(f(3)-F(3)\sum_{r=r(k)+1}^{\big[\frac{36k}{15-k}\big]}c_r(k)\Bigg)\Big(1+O\big(\log^{-1/3}D\big)\Big)   \\
           & \,\,\, \times \frac{\mathfrak{S}(n)\mathcal{J}(n)\mathscr{W}(z)}{\log\bf{\Xi}} +  O\big(n^{\frac{17}{18}+\frac{5}{6k}}\log^{-A}n\big)  \\
\geqslant  & \,\,   \frac{2e^{\gamma}}{3} (\log2-0.657181)\Big(1+O\big(\log^{-1/3}D\big)\Big)    \\
           & \,\,\, \times   \frac{\mathfrak{S}(n)\mathcal{J}(n)\mathscr{W}(z)}{\log\bf{\Xi}} +  O\big(n^{\frac{17}{18}+\frac{5}{6k}}\log^{-A}n\big)  \\
       \gg & \,\,   n^{\frac{17}{18}+\frac{5}{6k}}\log^{-6}n,
\end{align*}
which completes the proof of Theorem \ref{theorem-mixed}.

\section*{Acknowledgement}

   The authors would like to express the most sincere gratitude to the referee
for his/her patience in refereeing this paper. This work is supported by the
Fundamental Research Funds for the Central Universities (Grant No. 2019QS02),
 and National Natural Science Foundation of China (Grant No. 11901566, 11971476).

\end{document}